\documentclass[12pt,twoside]{amsart}
\usepackage{amsfonts}
\usepackage{amsmath}
\usepackage{amssymb}
\usepackage{graphicx}
\usepackage{mathrsfs}
\usepackage{multicol}
\usepackage{color}
\usepackage{pst-eucl,pstricks-add}
\usepackage{pst-3dplot}
\usepackage{pst-solides3d}

\usepackage{lscape}


\newtheorem{thm}{Theorem}
\newtheorem{cor}{Corollary}
\newtheorem{lem}{Lemma}
\newtheorem{prop}{Proposition}
\theoremstyle{definition}

\theoremstyle{remark}
\newtheorem{rem}{\bf Remark\/}

\numberwithin{equation}{section}

\definecolor{bronze}{rgb}{0.4, 0.7, 0.1}


\def\1{{\mathchoice {\rm 1\mskip-4mu l} {\rm 1\mskip-4mu l}{\rm 1\mskip-4.5mu l} {\rm 1\mskip-5mu l}}}
\newcommand{\ds}{\displaystyle}

\usepackage[width=16.00cm, height=21.00cm, left=2.00cm, right=2.00cm, top=4cm, bottom=2cm]{geometry}

\title{Toeplitz operators via Carleson measures On $\beta$-modified Bergman Spaces}
	\author{Safa Snoun}
	\email{snoun.safa@fsg.rnu.tn}
	\address{University of Gabes\\ Faculty of Sciences of Gabes\\ LR17ES11 Mathematics and Applications laboratery\\ 6072, Gabes, Tunisia.}
	\subjclass[2010]{30H20, 47B35, 47B32}
	
	\keywords{Bergman projection,  Bergman spaces, Toeplitz operators, Berezin transform, Carleson measure}
\begin{document}
    \begin{abstract}
      In this paper, we study Bergman projection $\mathbb{P}_{\alpha,\beta}$ and Toeplitz operators  $T^{\alpha,\beta}_\varphi$ on the $\beta$-modified Bergman space $\mathcal{A}_{\alpha,\beta}^p$. We give some properties of $\mathbb{P}_{\alpha,\beta}$ and a necessary and sufficient condition for $T^{\alpha,\beta}_\varphi$ to be compact. We end with a characterization of Toeplitz operators via Carleson measures by introducing a new Bergman metric inherited by the Bergman kernel $\mathbb{K}_{\alpha,\beta}$ that will be equivalent to the classical Bergman-Poincar\'e metric.
	\end{abstract}
		
		\maketitle

\section{Introduction}
		
	Berezin Transform, Toeplitz and Hankel operators on Bergman spaces are typical examples of operators that received some attention during the last period. Whereas the theory of these operators on the Hardy space is by now well understood and  becomes classical and have long been explored (cf. e.g.\cite{r6} and the bibliography given therein). However, only some few years ago, researchers began investigating these operators on the Bergman space. Nowadays, there are rich theories that combine between operator theory and Bergman spaces, specially on the behavior of Toeplitz, Hankel operators and Berezin transform. Well-known partial results are included in  \cite{r2,r1,r7,r8,St,r10,r11,r12,r9}.\\

In the current paper, we are concerned about the behavior of these operators on a new type of Bergman space, the so-called $\beta$-modified Bergman space introduced by N.Ghiloufi and M.Zaway in \cite{Gh-Za}. Let $\mathbb{D}$ be the open unit disk in the complex plane $\mathbb{C}$. For $-1<\alpha,\beta<+\infty$, the $\beta$-modified Bergman space $\mathcal{A}_{\alpha,\beta}^p=\mathcal{A}_{\alpha,\beta}^p(\mathbb{D}^*)$ consists of those holomorphic functions $f$ on the unit disk $\mathbb{D}^*$ such that
$$\left\|f\right\|_{\alpha,\beta,p}=\bigg(\int_{\mathbb{D}}|f(z)|^p\ d\mu_{\alpha,\beta}(z)\bigg)^{1/p}<+\infty,$$
where \begin{equation}\label{eq0}
\displaystyle{d\mu_{\alpha,\beta}(z)=\frac{1}{\mathscr{B}(\alpha+1,\beta+1)}|z|^{2\beta} (1-|z|^2)^\alpha dA(z)},
\end{equation}
 $\mathscr{B}$ is the beta function and $dA$ is the normalized area measure on $\mathbb{D}$. The space $\mathcal{A}_{\alpha,\beta}^2$ is a closed subspace of the Hilbert space $\textbf{L}^2(\mathbb{D},d\mu_{\alpha,\beta})$ with inner product given by \begin{equation}\label{eq1}
\left\langle f,g\right\rangle_{\alpha,\beta}=\int_{\mathbb{D}} f(z)\ \overline{g(z)}\ d\mu_{\alpha,\beta}(z),
\end{equation}
for every $f, g \in \textbf{L}^2(\mathbb{D},d\mu_{\alpha,\beta})$ and with reproducing (Bergman) kernel given by
\begin{equation}\label{eq2}
 \mathbb{K}_{\alpha,\beta}(w,z)=\frac{Q_{\alpha,\beta}(\xi)}{\xi^m(1-\xi)^{2+\alpha}}:=\mathcal{K}_{\alpha,\beta}(w\overline{z})
\end{equation}
 where 				
			$$\begin{array}{lcl}
				\mathcal{K}_{\alpha,\beta}(\xi)&=&\ds \frac{\mathscr B(\alpha+1,\beta+1)}{\mathscr B(\alpha+1,\beta_0+1)} \xi^{-m}\ {_2F_1}\left( \begin{array}{c} 1 , \alpha + \beta_0  + 2 \\ \beta_0+1 \end{array}\bigg | \xi \right)\\
				&=&\ds \frac{\mathscr B(\alpha+1,\beta+1)}{\mathscr B(\alpha+1,\beta_0+1)}\frac{1}{\xi^{m}(1-\xi)^{\alpha+2}}\ {_2F_1}\left( \begin{array}{c} \beta_0 , -(\alpha+1) \\ \beta_0+1 \end{array}\bigg | \xi \right)
				\end{array}$$
		 Here ${_2F_1}$ is the hypergeometric function and $\beta_0=\beta-m\in]-1,0]$ such that $m$ is a non-negative integer. Thus we deduce that  	
		
        $$Q_{\alpha,\beta}(\xi)=\left\{
            \begin{array}{lcl}
            (\alpha+1)\mathscr B(\alpha+1,\beta+1)& if &\beta\in\mathbb N\\
            \ds\frac{\mathscr B(\alpha+1,\beta+1)}{\mathscr B(\alpha+1,\beta_0+1)}\sum_{n=0}^{+\infty}\frac{\beta_0(-\xi)^n}{n+\beta_0}{\alpha+1\choose n} &if&\beta\not\in\mathbb N
          \end{array}\right.
        $$

The subspace $\mathcal{A}_{\alpha,\beta}^2$ is a Hilbert space and $\mathcal{A}_{\alpha,\beta}^2=\mathcal{A}_{\alpha,m}^2$ if $\beta=\beta_0+m$ with $m\in \mathbb{N}$ and $-1<\beta_0\leq 0$.  We claim here that $\mathcal{A}_{\alpha}^2(\mathbb{D})=\mathcal{A}_{\alpha,\beta_0}^2$ is the classical Bergman space equipped with the new norm $\left\|.\right\|_{\alpha,\beta_0,2}$. For more details about the properties of this Kernel and this space, one can refer to \cite{Gn-Sn,Gh-Za,Gh-Sn}. \\

Sometimes, we need at computation level to estimate the function $|Q_{\alpha,\beta}|$ by a constant. To this aim we consider $\mathcal{J}_{\alpha}$  the set of $\beta\in ]-1,0]$ such that $Q_{\alpha,\beta}$ has no zero in $\mathbb{D}$. This set is not empty because in \cite[Theorem 3.1]{Gh-Sn}, it has been shown that there exists $\beta_\alpha\in ]-1,0[$ such that $Q_{\alpha,\beta}$ has no zero in $\mathbb{D}$ for every $\beta_0\in]\beta_\alpha,0]$.\\

In the characterization of the compact Toeplitz operators on Bergman spaces, the M$\ddot{o}$bius transformations on $\mathbb{D}$ will play a crucial role: for $z\in \mathbb{D}$,  the automorphism $\psi_z$ is defined by
\begin{equation}\label{eq4}
\psi_z(w)=\frac{z-w}{1-\overline{z}w},
\end{equation}
which verifies the following properties:
\begin{enumerate}
	\item $\psi_z^{-1}=\psi_z$.
	\item The real Jacobian determinant of $\psi_z$ at $w$ is $\ds |\psi'_z(w)|^2=\frac{(1-|z|^2)^2}{|1-\overline{z}w|^4}$.
	\item $\ds 1-|\psi_z(w)|^2=\frac{(1-|z|^2)(1-|w|^2)}{|1-\overline{z}w|^2}$.
\end{enumerate}
One of the main issue I encountered in this context is that the measures under consideration  are not invariant by the set of disk automorphisms. This makes the situation more challenging and some properties do not work here.\\

The remaining sections are organized as follows. Section 2 deals with the necessary and sufficient conditions for the boundedness of projection on $\textbf{L}^p(\mathbb D,d\mu_{\alpha,\beta})$ and some properties of projection on the considered space is also presented in this section. Section 3 is devoted to the study of the Toeplitz operators by giving a conditions to be compact operator. We characterize  in the last section  the continuity and the compactness of Toeplitz operators via Carleson measures.

\section{Continuity of projections on $\textbf{L}^p(\mathbb D,d\mu_{\alpha,\beta})$}
For $-1<\alpha,\beta<+\infty$, let $\mathbb{P}_{\alpha,\beta}$ be the orthogonal projection from $\textbf{L}^2(\mathbb D,d\mu_{\alpha,\beta})$ onto $\mathcal A_{\alpha,\beta}^2$. Then for every $f\in \textbf{L}^2(\mathbb D,d\mu_{\alpha,\beta})$ we have
    $$\begin{array}{lcl}
         \mathbb{P}_{\alpha,\beta}f(z)&=&\ds \langle f,\mathbb{K}_{\alpha,\beta}(.,z)\rangle_{\alpha,\beta}=\int_{\mathbb D}f(w)\frac{Q_{\alpha,\beta}(z\overline{w})}{(z\overline{w})^m(1-z\overline{w})^{2+\alpha}}d\mu_{\alpha,\beta}(w)\\
         &=&\ds \frac{1}{z^m} \mathbb{P}_{\alpha,\beta_0}[w^mf(w)](z).
      \end{array}
    $$
    with $\beta=\beta_0+m$ where $m\in\mathbb N$ and $-1<\beta_0\leq0$. That is $\mathbb{P}_{\alpha,\beta}=M^{-1}\circ \mathbb{P}_{\alpha,\beta_0}\circ M$ where $M$ is the linear operator:
    $$\begin{array}{lccl}
      M:&\mathcal A_{\alpha,\beta}^2&\longrightarrow& \mathcal A_{\alpha,\beta_0}^2\\
      & f& \longmapsto & \ds \frac{\mathscr B(\alpha+1,\beta_0+1)}{\mathscr B(\alpha+1,\beta+1)}z^mf.
  \end{array}$$
 The last integral formula of $ \mathbb{P}_{\alpha,\beta}$ suggested that we can apply $ \mathbb{P}_{\alpha,\beta}$ to a function in $\textbf{L}^p(\mathbb{D},d\mu_{\alpha,\beta})$  whenever $1\leq p< +\infty$.

In this part, we will give necessary and sufficient  conditions for the  boundedness of $\mathbb P_{\alpha,\beta}$ from $\textbf{L}^p(\mathbb{D},d\mu_{a,b})$ onto $\mathcal{A}_{a,b}^p$.\\
In the rest of the paper $\theta\lesssim \eta$ near a point means that there exists $c>0$ such that $\theta(z)\leq c\eta(z)$ in a neighborhood of this point and $\theta\approx \eta$ means that  $\theta\lesssim \eta$ and  $\eta\lesssim \theta$.
The following lemma will be a fundamental tool for proofs.
\begin{lem}\label{l3}
For every $-1<\sigma,\gamma,\alpha,\beta<+\infty$ with $\beta=\beta_0+m$, $m\in \mathbb{N}$ and $\beta_0\in\mathcal J_\alpha$, we set
$$I_\omega(z):=\int_{\mathbb{D}}\frac{|Q_{\alpha,\beta}(z\overline{w})|(1-|w|^2)^\sigma|w|^{2\gamma}}{|1-z\overline{w}|^{2+\sigma+\omega}} dA(w).$$
Then, we have
$$I_\omega(z)\approx \left\{
\begin{array}{lcl}
1 & if&\omega<0\\
\log(\frac{1}{1-|z|^2})& if& \omega=0\\
\frac{1}{(1-|z|^2)^\omega}&if& \omega>0
\end{array}
\right.$$
as $|z|\rightarrow 1^-$.
\end{lem}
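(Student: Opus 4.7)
My approach is to reduce $I_\omega$ to a classical Forelli--Rudin type integral in two steps.

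\textbf{Step 1 (removing $Q_{\alpha,\beta}$).} Since $\beta=\beta_0+m$ with $\beta_0\in\mathcal J_\alpha$, the explicit series given in the introduction yields $Q_{\alpha,\beta}(\xi)=\frac{\mathscr B(\alpha+1,\beta+1)}{\mathscr B(\alpha+1,\beta_0+1)}Q_{\alpha,\beta_0}(\xi)$, so $Q_{\alpha,\beta}$ has no zero on $\mathbb D$ by the very definition of $\mathcal J_\alpha$. Moreover the hypergeometric series ${_2F_1}(\beta_0,-(\alpha+1);\beta_0+1;\xi)$ converges absolutely on $|\xi|\le 1$ because $c-a-b=\alpha+2>0$; hence $Q_{\alpha,\beta}$ extends continuously to $\overline{\mathbb D}$. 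Assuming this extension remains non-vanishing there, a compactness argument gives $|Q_{\alpha,\beta}|\approx 1$ uniformly on $\overline{\mathbb D}$, and it suffices to prove the claimed asymptotics for
$$J_\omega(z):=\int_{\mathbb D}\frac{(1-|w|^2)^\sigma|w|^{2\gamma}}{|1-z\overline w|^{2+\sigma+\omega}}\,dA(w).$$

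\textbf{Step 2 (removing $|w|^{2\gamma}$).} This weight only matters near $w=0$. Splitting $\mathbb D=\{|w|\le 1/2\}\cup\{1/2<|w|<1\}$, the inner disk contributes a quantity uniformly bounded in $z$ as $|z|\to 1^-$, because there $|1-z\overline w|\ge 1/2$ and $\int_{|w|\le 1/2}|w|^{2\gamma}\,dA(w)<\infty$ thanks to $\gamma>-1$; on the outer annulus $|w|^{2\gamma}\approx 1$. Thus, up to a bounded error, $J_\omega(z)$ is comparable to the classical integral $\int_{\mathbb D}(1-|w|^2)^\sigma|1-z\overline w|^{-(2+\sigma+\omega)}\,dA(w)$, whose asymptotics match the right-hand side of the lemma: this is the Forelli--Rudin estimate. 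I would obtain it by polar coordinates $w=re^{i\theta}$, invoking the standard angular bound
$$\int_0^{2\pi}\frac{d\theta}{|1-\rho e^{i\theta}|^{s}}\approx\begin{cases}1&\text{if }s<1,\\ \log\frac{1}{1-\rho}&\text{if }s=1,\\ (1-\rho)^{1-s}&\text{if }s>1,\end{cases}\qquad\rho\to 1^-,$$
with $\rho=|z|r$ and $s=2+\sigma+\omega$, then integrating against $(1-r^2)^\sigma r\,dr$. The trichotomy of $\int_0^1(1-r)^\sigma(1-|z|r)^{-(1+\sigma+\omega)}\,dr$ produces exactly the three regimes $\omega<0$, $\omega=0$, $\omega>0$.

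\textbf{Main obstacle.} The delicate step is Step 1: $\beta_0\in\mathcal J_\alpha$ forbids zeros of $Q_{\alpha,\beta}$ only on the \emph{open} disk, while its continuous extension could a priori vanish somewhere on $\partial\mathbb D$, breaking the uniform bound $|Q_{\alpha,\beta}|\approx 1$. I would handle this either by a refined boundary analysis of the hypergeometric series ruling out zeros on $\{|\xi|=1\}$ under the present hypothesis (tacitly strengthening $\mathcal J_\alpha$ if necessary), or by a localized argument exploiting that each of the three integrals concentrates its mass near $w=z/|z|$ as $|z|\to 1^-$, so that a lower bound for $|Q_{\alpha,\beta}|$ on a one-sided neighborhood of that single boundary point already suffices. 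Once this is settled, the rest of the argument is entirely routine.
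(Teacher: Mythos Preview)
Your outline is exactly the paper's approach: its proof merely invokes Theorem~7 of Hedenmalm--Korenblum--Zhu (the Forelli--Rudin estimate) together with the two-sided bound on $|Q_{\alpha,\beta}|$, which is precisely your Steps~1--2 followed by the classical trichotomy. The paper does not address your ``main obstacle'' any more explicitly than you do; it simply asserts the bound by reference.

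That obstacle is in fact harmless, and your own localization idea already resolves it without strengthening $\mathcal J_\alpha$. The upper bound $|Q_{\alpha,\beta}|\le C$ follows from continuity on $\overline{\mathbb D}$, so $I_\omega(z)\lesssim$ (Forelli--Rudin integral). For the lower bound, first note $I_\omega(z)=I_\omega(|z|)$ by the substitution $w\mapsto e^{i\arg z}w$, so one may take $z\in(0,1)$. In the regimes $\omega\ge 0$ the divergent mass of the Forelli--Rudin integral concentrates near $w=1$, where $z\overline w\to 1$ along $[0,1)$; Gauss's summation formula gives
\[
{_2F_1}\!\left(\begin{array}{c}\beta_0,\,-(\alpha+1)\\ \beta_0+1\end{array}\bigg|\,1\right)=\frac{\Gamma(\beta_0+1)\Gamma(\alpha+2)}{\Gamma(\alpha+\beta_0+2)}\neq 0,
\]
so $|Q_{\alpha,\beta}|$ is bounded below on a neighborhood of $1$ in $\overline{\mathbb D}$, and restricting the integral to that neighborhood already produces the required growth. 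For the bounded remainder (and for the entire integral when $\omega<0$) it suffices to integrate over a fixed compact subdisk of $\mathbb D$, on which $\beta_0\in\mathcal J_\alpha$ furnishes the lower bound directly. Hence no boundary non-vanishing of $Q_{\alpha,\beta}$ away from $\xi=1$ is ever needed.
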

\begin{proof}
     Using the same techniques used in \cite{He-KO-Zh} to prove Theorem $7$ and the hypothesis on $Q_{\alpha,\beta}$ cited in Section $5$ of this paper, one can prove the result.
\end{proof}

On account of \cite[Theorem 3]{Gh-Za}, the projection $\mathbb{P}_{\alpha,m}$ is  bounded  from  $L^p(\mathbb{D},d\mu_{a,b})$ onto $\mathcal{A}_{a,b}^p(\mathbb{D}^*)$  if and only if $p(\alpha+1)>(a+1)$ and $mp-2<2b<mp-2+2p$, when $p>1$ and if $m-2<2b\leq m$, when $p=1$. To prove the general case, we need the following theorem:
\begin{thm}\label{thm1}
Let $-1<a,b,\alpha,\beta<+\infty$ with $\beta=\beta_0+m$, $m\in \mathbb{N}$ and $\beta_0\in \mathcal J_\alpha$. We define the two integral operators $A$ and $B$ by
$$Af(z)=\frac{1}{z^m}\int_{\mathbb{D}} \frac{f(w)\ Q_{\alpha,\beta}(z\overline{w})\ (1-|w|^2)^{\alpha-a}\ w^m}{|w|^{2m+2b-2\beta} (1-z\overline{w})^{2+\alpha}}\ d\mu_{a,b}(w)$$
and
$$Bf(z)=\frac{1}{|z|^m}\int_{\mathbb{D}} \frac{f(w)\ |Q_{\alpha,\beta}(z\overline{w})|\ (1-|w|^2)^{\alpha-a}\ |w|^{2\beta-2b-m} }{\ |1-z\overline{w}|^{2+\alpha}}\ d\mu_{a,b}(w).$$
Then for $1\leq p<+\infty$, the following assertions are equivalent:
\begin{enumerate}
\item  $A$ is bounded on $\textbf{L}^p(\mathbb{D},d\mu_{a,b})$
\item  $B$ is bounded on $\textbf{L}^p(\mathbb{D},d\mu_{a,b})$
\item  $p(\alpha+1)>(a+1)$ and
$$\left\{
\begin{array}{lcl}
m-2<2b\leq 2\beta-m& if& p=1\\
pm-2<2b<p(2\beta+2)-pm-2& if& p>1.
\end{array}
\right.$$
\end{enumerate}
\end{thm}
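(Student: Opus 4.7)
The plan is to prove the theorem along the circular implication $(2)\Rightarrow(1)\Rightarrow(3)\Rightarrow(2)$, which will give all three equivalences. The implication $(2)\Rightarrow(1)$ is immediate: taking absolute values inside the integral defining $Af$ and using $|w^m/z^m|=|w|^m/|z|^m$, one sees that $|Af(z)|\le B|f|(z)$ pointwise on $\mathbb D^*$, so boundedness of $B$ on $\textbf{L}^p(d\mu_{a,b})$ transfers to $A$.

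For $(3)\Rightarrow(2)$ with $p>1$, I would apply Schur's test. The natural test function is the radial weight $h(w)=(1-|w|^2)^s|w|^t$ with parameters $s,t$ to be fixed, and $q$ the conjugate exponent. Substituting into the Schur integrals
\begin{equation*}
J_1(z)=\int_{\mathbb D}\frac{|Q_{\alpha,\beta}(z\overline w)|(1-|w|^2)^{\alpha-a}|w|^{2\beta-2b-m}}{|z|^m|1-z\overline w|^{2+\alpha}}\,h(w)^q\,d\mu_{a,b}(w)
\end{equation*}
and its dual $J_2(w)$, the $d\mu_{a,b}$-density cancels the $(1-|w|^2)^{-a}$ and $|w|^{-2b}$ factors, reducing both $J_1$ and $J_2$ to integrals exactly of the type $I_\omega$ in Lemma~\ref{l3}. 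Estimating by Lemma~\ref{l3} gives $J_1(z)\lesssim |z|^{-m}(1-|z|^2)^{sq}$ up to the relevant logarithmic boundary case, and similarly for $J_2$. I then need to choose $s,t$ satisfying the four constraints $\sigma,\gamma>-1$ that make Lemma~\ref{l3} applicable in each of $J_1,J_2$, together with the two Schur upper bounds $J_1(z)\lesssim h(z)^q$ and $J_2(w)\lesssim h(w)^p$. The boundary bound fixes the range of $s$ and produces the condition $p(\alpha+1)>a+1$, while matching the $|z|^{-m}$ factor to $|z|^{tq}$ near the origin produces the two-sided bound on $2b$ in (3). The case $p=1$ is handled separately: I would apply Fubini directly to $\int B|f|\,d\mu_{a,b}$, obtaining $\int|f(w)|\,\Phi(w)\,d\mu_{a,b}(w)$ with $\Phi$ an explicit integral again fitting Lemma~\ref{l3}; the $p=1$ conditions $m-2<2b\le 2\beta-m$ are exactly what makes $\Phi$ bounded.

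The direction $(1)\Rightarrow(3)$ is proved by testing $A$ on a suitable family of functions. Natural candidates are $f_{s,t}(w)=(1-|w|^2)^s|w|^t$ (and, for sharpness near the boundary, dilates $(1-\overline{z_0}w)^{-\tau}$ with $|z_0|\to 1^-$). One computes $\|f_{s,t}\|_{L^p(d\mu_{a,b})}$ by the classical beta-integral, and estimates $\|Af_{s,t}\|_{L^p(d\mu_{a,b})}$ from below using Lemma~\ref{l3} (together with the non-vanishing of $Q_{\alpha,\beta}$ on $\mathbb D$ guaranteed by $\beta_0\in\mathcal J_\alpha$, which turns the upper bound in Lemma~\ref{l3} into a two-sided asymptotic). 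Letting the parameters approach the borderline values where $\|f_{s,t}\|_{L^p(d\mu_{a,b})}$ remains finite while $\|Af_{s,t}\|$ diverges forces each of the inequalities listed in (3): the boundary-side parameter forces $p(\alpha+1)>a+1$, testing against monomials $w^n$ for small $n$ forces $pm-2<2b$, and the symmetric test forces the upper bound $2b<p(2\beta+2)-pm-2$ (respectively $2b\le 2\beta-m$ when $p=1$).

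The main obstacle will be the Schur step, specifically carrying through the algebra of exponents so that the admissible range of $(s,t)$ is nonempty precisely under (3). Two features make the present setting harder than the classical Bergman case: the extra $|z|^{-m}$ in the kernel forces $t$ to be negative enough to dominate $|z|^{-m}$ near $0$, while the weight $|w|^{2\beta-2b-m}$ (which may have either sign) couples the origin and boundary constraints; one must verify that the interval of allowed $t$ for $J_1$ intersects the interval for $J_2$ exactly when the two-sided inequality on $2b$ in (3) holds. The $p=1$ endpoint and, to a lesser extent, the borderline case of the boundary estimate in Lemma~\ref{l3} (the logarithmic case $\omega=0$) must also be handled by hand.
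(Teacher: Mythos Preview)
Your scheme $(2)\Rightarrow(1)$ and $(3)\Rightarrow(2)$ coincides with the paper's: the pointwise domination $|Af|\le B|f|$ gives the first, and Schur's test with the radial weight $h(z)=|z|^{-t}(1-|z|^2)^{-s}$ gives the second, the admissible range for $(s,t)$ being nonempty exactly under the inequalities in (3). So two thirds of your circle is the paper's own argument.

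The genuine gap is in $(1)\Rightarrow(3)$. Up to a constant, $A$ \emph{is} the projection $\mathbb P_{\alpha,\beta}$, and your proposed test functions are precisely the ones on which $\mathbb P_{\alpha,\beta}$ carries no information: for any radial $f_{s,t}(w)=(1-|w|^2)^s|w|^t$ one has $Af_{s,t}\equiv\text{const}$ (expand $\mathcal K_{\alpha,\beta}(z\overline w)$ in powers of $z\overline w$; only the $z^0$ term survives after angular integration), and for any holomorphic monomial $w^n$ one has $Aw^n=c\,w^n$. In either case $\|Af\|_{a,b,p}/\|f\|_{a,b,p}$ is a fixed constant independent of the parameters, so no inequality on $a,b,\alpha,\beta$ can be extracted. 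In particular, ``testing against monomials $w^n$ for small $n$'' cannot force $pm-2<2b$, and Lemma~\ref{l3} is of no help here because it estimates integrals against $|Q_{\alpha,\beta}(z\overline w)|/|1-z\overline w|^{2+\alpha}$, not against the complex kernel $Q_{\alpha,\beta}(z\overline w)/(1-z\overline w)^{2+\alpha}$, whose oscillation is exactly what kills the information.

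The paper closes this gap by inserting the step $(1)\Rightarrow(2)$: for fixed $z$ one replaces $f$ by $\Phi_z f$, where
\[
\Phi_z(w)=\frac{(1-z\overline w)^{2+\alpha}\,|Q_{\alpha,\beta}(z\overline w)|\,|w|^m}{|1-z\overline w|^{2+\alpha}\,Q_{\alpha,\beta}(z\overline w)\,w^m},
\]
a unimodular factor (here $\beta_0\in\mathcal J_\alpha$ is used so that $Q_{\alpha,\beta}$ does not vanish). This turns $A$ into $B$ pointwise and reduces the necessity to testing $B$ and its adjoint $B^\star$ on the single radial function $f_N(w)=(1-|w|^2)^N$. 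Because $B$ has a nonnegative kernel there is no cancellation: $Bf_N(z)\approx|z|^{-m}$ near $0$, and finiteness of $\|Bf_N\|_{a,b,p}$ forces $2b>pm-2$; the remaining inequalities come from $\|B^\star f_N\|_{a,b,q}<\infty$ (respectively $B^\star 1\in L^\infty$ when $p=1$), again via Lemma~\ref{l3}. If you prefer to avoid the phase trick, an equivalent repair is to test $A$ not on radial functions but on $\overline w^{\,m}(1-|w|^2)^N$, which $\mathbb P_{\alpha,\beta}$ sends to a nonzero multiple of $z^{-m}$; this produces the origin constraint directly, and the dual test on $A^\star$ gives the rest.
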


\begin{proof}
 It's clear that the boundedness of $B$ on $\textbf{L}^p(\mathbb{D},d\mu_{a,b})$ implies the boundedness of $A$.  As for proving  that $B$ is bounded on $\textbf{L}^p(\mathbb{D},d\mu_{a,b})$ when $A$ is also bounded, it suffices to use the following transformation:
$$\Phi_{z} f(w)=\frac{(1-z\overline{w})^{2+\alpha}\ |Q_{\alpha,\beta}(z\overline{w})| |w|^m}{|1-z\overline{w}|^{2+\alpha}\ Q_{\alpha,\beta}(z\overline{w}) w^m} f(w).$$
Now, we assume that $B$ is bounded on $\textbf{L}^p(\mathbb{D},d\mu_{a,b})$ and we apply this operator to a function of the form $f_N (z)=(1-|z|^2)^N$, where $N$ is sufficiently large. Then, we obtain
$$\ds\left\|B f_N\right\|^p_{a,b,p}=\ds\int_{\mathbb{D}} \frac{(1-|z|^2)^a\ |z|^{2b-pm}}{\mathscr{B}^{p+1}(a+1,b+1)}\left( \int_{\mathbb{D}} \frac{(1-|w|^2)^{N+\alpha} |Q_{\alpha,\beta}(z\overline{w})||w|^{2\beta-m}}{|1-z\overline{w}|^{2+\alpha}}dA(w)\right)^p dA(z)$$
is finite. So, accordingly to Lemma \ref{l3}, we conclude that
\begin{equation}\label{eq1}
 2b> pm-2.
\end{equation}

To prove the others inequalities, we suppose first that $p>1$ and let $q$ be its conjugate exponent. Let $B^\star$ be the adjoint operator of $B$ with respect to the inner product $\langle.,.\rangle_{a,b}$ which is given by
\begin{align*}
B^\star g(z)&= (1-|z|^2)^{\alpha-a}|z|^{2\beta-m-2b} \int_{\mathbb{D}} \frac{g(w)\ |Q_{\alpha,\beta}(z\overline{w})|}{|w|^{m} |1-z\overline{w}|^{2+\alpha}}\ d\mu_{a,b}(w)\\
&=\frac{(1-|z|^2)^{\alpha-a}|z|^{2\beta-m-2b}}{\mathscr{B}(a+1,b+1)} \int_{\mathbb{D}} \frac{g(w)\ |Q_{\alpha,\beta}(z\overline{w})|\ |w|^{2b-m} (1-|w|^2)^a}{ |1-z\overline{w}|^{2+\alpha}}dA(w).
\end{align*}
We apply  $B^\star$ to the function $f_N$ defined above, we obtain that
$$\begin{array}{lcl}
     \left\|B^\star f_N\right\|^q_{a,b,q}&=&\ds M\int_{\mathbb{D}} (1-|z|^2)^{a+q(\alpha-a)} |z|^{2b+q(2\beta-m-2b)}\times\\
     &&\ds\hfill\left( \int_{\mathbb{D}} \frac{(1-|w|^2)^{N+a} |Q_{\alpha,\beta}(z\overline{w})|\ |w|^{2b-m}}{|1-z\overline{w}|^{2+\alpha}} dA(w)\right)^qdA(z)
  \end{array}
$$
is finite, with $\ds M=\frac{1}{\mathscr{B}^{q+1}(a+1,b+1)}$. Whence, by using again Lemma \ref{l3} together with Equation (\ref{eq1}), we get
$$pm-2<2b<p(2\beta+2)-pm-2\quad and\quad p(\alpha+1)>a+1.$$
Now, if we suppose that $p=1$, then $B^\star$ is bounded on $\textbf{L}^\infty(\mathbb{D})$. If we  act $B^\star$ on the constant function $g\equiv 1$, we obtain
$$ \sup_{z\in \mathbb{D}^*}\ \frac{(1-|z|^2)^{\alpha-a}|z|^{2\beta-m-2b}}{\mathscr{B}(a+1,b+1)} \int_{\mathbb{D}} \frac{|Q_{\alpha,\beta}(z\overline{w})|\ |w|^{2b-m}\ (1-|w|^2)^{a}}{ |1-z\overline{w}|^{2+\alpha}}\ dA(w)<+\infty.$$
Then, by applying Lemma \ref{l3},  we find what we want to show: $\alpha-a>0$, $2\beta-m-2b\geq0$ and $2b-m>-2$.\\

Conversely, for $p=1$ the result follows from Lemma \ref{l3}. Now, if $p>1$ and $pm-2<2b<p(2\beta+2)-pm-2$, let we prove that  $B$ is bounded on $\textbf{L}^p(\mathbb{D},d\mu_{a,b})$. We put $$h(z)=\frac{1}{|z|^t(1-|z|^2)^s},\ t, s\in \mathbb{R}$$ and
$$\psi(z,w)=\frac{ (1-|w|^2)^{\alpha-a}\ |Q_{\alpha,\beta}(z\overline{w})| |w|^{2\beta-m-2b}}{|z|^{m} |1-z\overline{w}|^{2+\alpha}}.$$
By virtue of Lemma \ref{l3}, if we assume that
$$ 0<s<\frac{\alpha+1}{q},\qquad \frac{m}{q}\leq t<\frac{2\beta-m+2}{q}$$
then
$$\begin{array}{l}
\ds\int_{\mathbb{D}}  h(w)^q\ \psi(z,w)\ d\mu_{a,b}(w)=\\
=
\ds\frac{1}{\mathscr{B}(a+1,b+1)}\int_{\mathbb{D}} \frac{ (1-|w|^2)^{\alpha-qs}\ |Q_{\alpha,\beta}(z\overline{w})|\ |w|^{2\beta-m-qt}}{|z|^{m} |1-z\overline{w}|^{2+\alpha}} dA(w)\\
\ds\leq \frac{C_1}{ |z|^{m} (1-|z|^2)^{sq}}=C_1.|z|^{tq-m} h(z)^q.
\end{array}$$
If we assume that
$$ \frac{a-\alpha}{p}<s<\frac{a+1}{p},\qquad \frac{2b+m-2\beta}{p}\leq t<\frac{2b-m+2}{p}$$
then
$$\begin{array}{l}
\ds\int_{\mathbb{D}}  h(z)^p\ \psi(z,w) d\mu_{a,b}(z)=\\
=\ds\frac{ (1-|w|^2)^{\alpha-a} |w|^{2\beta-m-2b}}{\mathscr{B}(a+1,b+1)}\int_{\mathbb{D}} \frac{(1-|z|^2)^{a-ps}|Q_{\alpha,\beta}(z\overline{w})||z|^{2b-m-pt}}{|1-z\overline{w}|^{2+\alpha}} dA(z)\\
\ds\leq \frac{C_2.\ |w|^{2\beta-m-2b}}{  (1-|w|^2)^{sp}}=C_2.|w|^{2\beta-m-2b+tp} h(w)^p.
\end{array}$$
Hypothesis $(3)$ gives that
$$\left]\frac{m}{q},\frac{2\beta-m+2}{q}\right[\cap\left]\frac{2b+m-2\beta}{p},\frac{2b-m+2}{p}\right[\neq\emptyset,$$
$$\left]0,\frac{\alpha+1}{q} \right[\cap\left]\frac{a-\alpha}{p},\frac{a+1}{p}\right[\neq \emptyset$$
which shows the existence of $t$ and $s$ satisfying the inequalities above. Thus, an application of Shur's test implies that $B$ is  bounded on  $\textbf{L}^p(\mathbb{D},d\mu_{a,b})$.
\end{proof}
\begin{cor}\label{cor1}
Suppose $-1<a,b,\alpha,\beta<+\infty$ with $\beta=\beta_0+m$, $m\in \mathbb{N}$ and $\beta_0\in \mathcal J_\alpha$. Let $1\leq p<+\infty$, then $\mathbb{P}_{\alpha,\beta}$ is a bounded projection from $\textbf{L}^p(\mathbb{D},d\mu_{a,b})$ onto $\mathcal{A}_{a,b}^p$ if and only if $p(\alpha+1)>(a+1)$ and $$\left\{
\begin{array}{lcl}
m-2<2b\leq 2\beta-m& if& p=1\\
pm-2<2b<p(2\beta+2)-pm-2& if& p>1.
\end{array}
\right.$$
\end{cor}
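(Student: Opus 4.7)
The plan is to deduce this corollary directly from Theorem \ref{thm1}. First, I would rewrite the projection against the measure $d\mu_{a,b}$. Using
$$d\mu_{\alpha,\beta}(w)=\frac{\mathscr{B}(a+1,b+1)}{\mathscr{B}(\alpha+1,\beta+1)}\,|w|^{2\beta-2b}(1-|w|^2)^{\alpha-a}\,d\mu_{a,b}(w)$$
together with $\overline{w}^{-m}=w^{m}/|w|^{2m}$, a direct substitution in the integral formula for $\mathbb{P}_{\alpha,\beta}$ recalled in Section 2 yields
$$\mathbb{P}_{\alpha,\beta}f(z)=\frac{\mathscr{B}(a+1,b+1)}{\mathscr{B}(\alpha+1,\beta+1)}\,Af(z),$$
where $A$ is the operator of Theorem \ref{thm1}. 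Moreover $|\mathbb{P}_{\alpha,\beta}f|$ is pointwise dominated by a constant multiple of $B(|f|)$, so the boundedness of $\mathbb{P}_{\alpha,\beta}$ from $\textbf{L}^p(\mathbb{D},d\mu_{a,b})$ into itself is equivalent to the boundedness of $A$ (equivalently $B$) on $\textbf{L}^p(\mathbb{D},d\mu_{a,b})$. By Theorem \ref{thm1}, this is in turn equivalent to the conditions $p(\alpha+1)>a+1$ together with the stated inequalities on $2b$, which already gives both implications at the level of $\textbf{L}^p\to\textbf{L}^p$ boundedness.

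Next I would upgrade the $\textbf{L}^p\to\textbf{L}^p$ statement into a projection onto $\mathcal{A}_{a,b}^p$. Holomorphy of $\mathbb{P}_{\alpha,\beta}f$ on $\mathbb{D}^*$ follows from differentiation under the integral sign applied to the kernel $\mathbb{K}_{\alpha,\beta}(\cdot,z)$, with the dominating bound supplied by Lemma \ref{l3}; together with the $\textbf{L}^p$-estimate this places $\mathbb{P}_{\alpha,\beta}f$ in $\mathcal{A}_{a,b}^p$. Surjectivity would follow from showing that $\mathbb{P}_{\alpha,\beta}$ restricts to the identity on $\mathcal{A}_{a,b}^p$: one approximates $f\in\mathcal{A}_{a,b}^p$ by finite Laurent sums $z^{-m}p(z)$ lying in $\mathcal{A}_{\alpha,\beta}^2\cap\mathcal{A}_{a,b}^p$ (obtainable through dilations $f_r(z)=f(rz)$ followed by truncation of the Laurent expansion), applies the reproducing property of $\mathbb{K}_{\alpha,\beta}$ on $\mathcal{A}_{\alpha,\beta}^2$ to conclude $\mathbb{P}_{\alpha,\beta}f_r=f_r$, and passes to the limit using the $\textbf{L}^p$-boundedness just established.

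The main obstacle I anticipate is this final density/reproduction step: one has to know that $\mathcal{A}_{\alpha,\beta}^2\cap\mathcal{A}_{a,b}^p$ is dense in $\mathcal{A}_{a,b}^p$ and that the dilation $f\mapsto f_r$ is well-behaved on $\mathcal{A}_{a,b}^p$ with respect to the $|z|^{2b}$ weight, neither of which is entirely automatic since the underlying measures $d\mu_{\alpha,\beta}$ and $d\mu_{a,b}$ differ and the disk automorphisms do not preserve them. Once this density point is settled, the equivalence with the conditions in the statement is a transparent consequence of Theorem \ref{thm1}, and necessity is read off by testing $\mathbb{P}_{\alpha,\beta}$ on the same family $f_N(z)=(1-|z|^2)^N$ used in that theorem's proof.
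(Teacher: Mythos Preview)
Your approach is correct and mirrors the paper's own proof, which consists of a single line declaring the corollary an immediate consequence of Theorem~\ref{thm1}; your explicit identification of $\mathbb{P}_{\alpha,\beta}$ with a constant multiple of the operator $A$ is precisely the link the paper leaves implicit. The extra care you take with the range and idempotency of the projection goes beyond what the paper spells out, and the density obstacle you flag is real but standard once the $\textbf{L}^p$-boundedness is in hand.
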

\begin{proof}
It's an immediate consequence of Theorem \ref{thm1}.
\end{proof}

 We claim here that if $\alpha=a$ and $\beta=b$, then  $\mathbb{P}_{\alpha,\beta}$ is a bounded projection from $\textbf{L}^p(\mathbb{D},d\mu_{\alpha,\beta})$ onto $\mathcal{A}_{\alpha,\beta}^p$ if and only if
$$\frac{2(\beta+1)}{2(\beta+1)-m}<p<\frac{2(\beta+1)}{m}.$$
Of course if $m=0$ (means $\beta=\beta_0\in \mathcal J_\alpha$), these inequalities are reduced to $p>1$.

Now, we give some general properties for the orthogonal projection $\mathbb{P}_{\alpha,\beta}$ on the $\beta$-modified Bergman space $\mathcal A_{\alpha,\beta}^2$.

\begin{lem}\label{l1}
Let $-1<\alpha,\beta<+\infty$ with $\beta=\beta_0+m$ and $m\in \mathbb{N}$ and $\mathbb{P}_{\alpha,\beta}$ be the orthogonal projection on $\mathcal A_{\alpha,\beta}^2$. Then, for $s,t$ be nonnegative integers, we have
$$\mathbb{P}_{\alpha,\beta}(\overline{z}^s z^{t-m})=\left\{
\begin{array}{lcl}
\ds \frac{(\alpha+\beta_0+t+2)_{-s}}{(\beta_0+t+1)_{-s}} z^{t-s-m} & if& t\geq s\\
0& if& t< s
\end{array}
\right.$$
where $(a)_n=a(a+1)\dots(a+n-1)=\ds\frac{\Gamma(a+n)}{\Gamma(a)}$ is the Pochhammer symbol.
\end{lem}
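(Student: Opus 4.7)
The plan is to reduce the computation to the case $\beta=\beta_0$ via the relation
$$\mathbb{P}_{\alpha,\beta}f(z)=\frac{1}{z^m}\,\mathbb{P}_{\alpha,\beta_0}\bigl[w^mf(w)\bigr](z)$$
established just after the definition of $\mathbb{P}_{\alpha,\beta}$. Applied to $f(w)=\overline{w}^sw^{t-m}$ we get $w^mf(w)=\overline{w}^sw^t$, so everything reduces to computing $\mathbb{P}_{\alpha,\beta_0}(\overline{w}^sw^t)(z)$ and then multiplying by $z^{-m}$.

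For this reduced problem I will use the standard orthogonal basis of $\mathcal{A}_{\alpha,\beta_0}^{2}$. A routine polar-coordinates computation gives
$$\langle w^n,w^k\rangle_{\alpha,\beta_0}=\delta_{nk}\,\frac{\mathscr{B}(\alpha+1,n+\beta_0+1)}{\mathscr{B}(\alpha+1,\beta_0+1)},$$
so the monomials $\{w^n\}_{n\geq 0}$ form an orthogonal basis. Expanding the projection in this basis gives
$$\mathbb{P}_{\alpha,\beta_0}(\overline{w}^sw^t)(z)=\sum_{n\geq 0}\frac{\langle \overline{w}^sw^t,w^n\rangle_{\alpha,\beta_0}}{\|w^n\|_{\alpha,\beta_0,2}^{2}}\,z^n.$$
Since $\langle\overline{w}^sw^t,w^n\rangle_{\alpha,\beta_0}=\int_{\mathbb D}w^t\overline{w}^{s+n}\,d\mu_{\alpha,\beta_0}$, the angular integral forces $n=t-s$, which is only admissible when $t\ge s$; this gives the vanishing in the second case of the statement.

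When $t\geq s$, only the term $n=t-s$ survives and its value is $\|w^t\|_{\alpha,\beta_0,2}^{2}$, so
$$\mathbb{P}_{\alpha,\beta_0}(\overline{w}^sw^t)(z)=\frac{\mathscr{B}(\alpha+1,t+\beta_0+1)}{\mathscr{B}(\alpha+1,t-s+\beta_0+1)}\,z^{t-s}.$$
Rewriting the Beta ratio via $\mathscr{B}(a,b)=\Gamma(a)\Gamma(b)/\Gamma(a+b)$ yields
$$\frac{\Gamma(t+\beta_0+1)\,\Gamma(\alpha+t-s+\beta_0+2)}{\Gamma(t-s+\beta_0+1)\,\Gamma(\alpha+t+\beta_0+2)}=\frac{(\alpha+\beta_0+t+2)_{-s}}{(\beta_0+t+1)_{-s}},$$
which is exactly the Pochhammer expression required. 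Multiplying by $z^{-m}$ and applying the reduction formula of the first paragraph finishes the proof.

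There is no substantial obstacle: the only point requiring a bit of care is the bookkeeping turning the Beta-function quotient into the claimed ratio of Pochhammer symbols, and checking that the formal expansion in the $\{w^n\}$-basis is legitimate here (which it is since $\overline{w}^sw^t\in\textbf{L}^2(\mathbb D,d\mu_{\alpha,\beta_0})$ for any $s,t\geq 0$).
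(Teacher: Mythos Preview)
Your argument is correct. It is essentially the same computation as the paper's, packaged slightly differently: the paper works directly with the integral representation of $\mathbb{P}_{\alpha,\beta}$, expands the hypergeometric kernel ${_2F_1}$ as a power series in $z\overline{w}$, and then integrates term by term in polar coordinates, so that only the index $n=t-s$ survives and yields the same Beta quotient $\mathscr{B}(\alpha+1,t+\beta_0+1)/\mathscr{B}(\alpha+1,t-s+\beta_0+1)$. You instead invoke the reduction $\mathbb{P}_{\alpha,\beta}=M^{-1}\circ\mathbb{P}_{\alpha,\beta_0}\circ M$ first and then expand $\mathbb{P}_{\alpha,\beta_0}$ in the orthogonal monomial basis $\{w^n\}_{n\ge 0}$ of $\mathcal{A}_{\alpha,\beta_0}^2$. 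Since the hypergeometric series in the paper's proof \emph{is} the expansion of the reproducing kernel in that same basis, the two routes coincide at the level of the actual integrals; your version just makes the orthogonality argument more explicit and avoids writing out the ${_2F_1}$.
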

\begin{proof}
Let $z\in \mathbb{D}^*$, then
$$\begin{array}{lcl}
\mathbb{P}_{\alpha,\beta}(\overline{z}^s z^{t-m})&=&\ds \frac{z^{-m}}{\mathscr{B}(\alpha+1,\beta_0+1)}\int_{\mathbb{D}} \overline{w}^{s-m}w^{t-m}  _2F_1\left(\left.
\begin{array}{c}
1,\alpha+\beta_0+2\\
\beta_0+1
\end{array}\right|\overline{w}z\right) |w|^{2\beta}(1-|w|^2)^{\alpha} dA(w)\\
&=&\ds \frac{z^{-m}}{\mathscr{B}(\alpha+1,\beta_0+1)}\sum_{n=0}^{+\infty}\frac{(\alpha+\beta_0+2)_n}{(
\beta_0+1)_n} z^n \int_{\mathbb{D}} \overline{w}^{s-m+n} w^{t-m}  |w|^{2\beta}(1-|w|^2)^{\alpha} dA(w).\\
\end{array}$$
In order to evaluate the last integral, we use polar coordinates
$$\begin{array}{lcl}
\int_{\mathbb{D}} \overline{w}^{s-m+n} w^{t-m}  |w|^{2\beta}(1-|w|^2)^{\alpha} dA(w)&=&\ds \frac{1}{\pi}\int_{0}^1 r^{s+t-2m+2\beta+n+1} (1-r^2)^\alpha dr \int_{0}^{2\pi} e^{i(t-s-n)\theta} d\theta\\
&=& \left\{
\begin{array}{lcl}
\ds \mathscr{B}(\alpha+1, t+\beta_0+1) & if& n=t-s\\
0& if& n\neq t-s
\end{array}
\right.$$
\end{array}$$
Therefore and by a simple computation we find the desired result.
\end{proof}
\begin{lem}\label{l2}
Let $-1<\alpha,\beta<+\infty$ with $\beta=\beta_0+m$ and $m\in \mathbb{N}$ and $\mathbb{P}_{\alpha,\beta}$ be the orthogonal projection on $\mathcal A_{\alpha,\beta}^2$. Then, for every non-negative integer $s$, we have
$$\left\|\mathbb{P}_{\alpha,\beta}\left(\overline{z}^s \sum_{k=0}^{+\infty} a_k z^{k-m}\right)\right\|^2_{\alpha,\beta}=
\ds \sum_{k=0}^{+\infty} \frac{(\beta+1)_{t-s-m}}{(\alpha+\beta+2)_{t-s-m}}\frac{(\alpha+\beta_0+k+2)^2_{-s}}{(\beta_0+k+1)^2_{-s}} |a_k|^{2}
$$
\end{lem}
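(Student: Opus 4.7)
The plan is to apply Lemma \ref{l1} termwise and then exploit the orthogonality of the monomials $\{z^n\}_{n\in\mathbb{Z}}$ in $\mathcal{A}_{\alpha,\beta}^2$. As a preliminary computation, I would evaluate, via polar coordinates and the definition of the Beta function, the norm of a monomial. For $n$ such that $n+\beta>-1$,
\begin{equation*}
\|z^n\|_{\alpha,\beta}^2=\frac{1}{\mathscr B(\alpha+1,\beta+1)}\int_{\mathbb{D}}|z|^{2(n+\beta)}(1-|z|^2)^\alpha\,dA(z)=\frac{\mathscr B(\alpha+1,n+\beta+1)}{\mathscr B(\alpha+1,\beta+1)}=\frac{(\beta+1)_n}{(\alpha+\beta+2)_n},
\end{equation*}
the last identity being the Pochhammer form of the Beta ratio (valid for negative indices as well, in terms of $\Gamma$-ratios). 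The angular part in polar coordinates also gives that the monomials $z^n$ are mutually orthogonal with respect to $\langle\cdot,\cdot\rangle_{\alpha,\beta}$.

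Next I would apply $\mathbb{P}_{\alpha,\beta}$ term by term to the series $\overline{z}^s\sum_{k\geq 0}a_k z^{k-m}$. By Lemma \ref{l1}, the $k$-th summand is sent to $0$ when $k<s$ and to $a_k\dfrac{(\alpha+\beta_0+k+2)_{-s}}{(\beta_0+k+1)_{-s}}z^{k-s-m}$ when $k\geq s$. The termwise step is legitimate because the original series converges in $\textbf{L}^2(\mathbb{D},d\mu_{\alpha,\beta})$ (the monomials $\overline{z}^s z^{k-m}$ being orthogonal with explicit norms from the computation above) and $\mathbb{P}_{\alpha,\beta}$ is a continuous orthogonal projection on this Hilbert space. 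Hence
\begin{equation*}
\mathbb{P}_{\alpha,\beta}\!\left(\overline{z}^s\sum_{k=0}^{+\infty}a_k z^{k-m}\right)=\sum_{k=s}^{+\infty}a_k\frac{(\alpha+\beta_0+k+2)_{-s}}{(\beta_0+k+1)_{-s}}\,z^{k-s-m}.
\end{equation*}

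Finally, using orthogonality of the $z^{k-s-m}$, the squared norm breaks into a sum of squared coefficients times the monomial norms from the first step; substituting $n=k-s-m$ in the monomial-norm formula yields exactly $\dfrac{(\beta+1)_{k-s-m}}{(\alpha+\beta+2)_{k-s-m}}$, and the claimed identity follows (with the convention that terms indexed by $k<s$ vanish, consistent with the Pochhammer symbol $(\cdot)_{-s}$ read through Lemma \ref{l1}).

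The only delicate point is verifying that $z^{k-s-m}$ does lie in $\mathcal{A}_{\alpha,\beta}^2$ for each $k\geq s$, i.e.\ that the exponent satisfies $k-s-m+\beta>-1$; this reduces to $k\geq s-\beta_0-1$, which holds for all $k\geq s$ since $\beta_0\in(-1,0]$. Beyond this bookkeeping, the argument is essentially a direct computation once Lemma \ref{l1} is available.
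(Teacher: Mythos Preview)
Your argument is correct and follows essentially the same route as the paper: apply Lemma~\ref{l1} termwise (so that the contributions with $k<s$ vanish), then compute the norm by exploiting the orthogonality of the monomials $z^{k-s-m}$ via polar coordinates, recovering $\mathscr B(\alpha+1,k-s+\beta_0+1)/\mathscr B(\alpha+1,\beta+1)=(\beta+1)_{k-s-m}/(\alpha+\beta+2)_{k-s-m}$. Your version is slightly more explicit in justifying the termwise application of $\mathbb{P}_{\alpha,\beta}$ and in checking the integrability condition $k-s-m+\beta>-1$, but otherwise the two proofs coincide.
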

\begin{proof}
In account of Lemma \ref{l1}, it is easy to see that $\mathbb{P}_{\alpha,\beta}\left(\overline{z}^s \sum_{k=0}^{s-1} a_k z^{k-m}\right)= 0$ thus it suffices to compute the norm for every $k\geq s $.  A classical computation gives:
$$\begin{array}{lcl}
\ds\left\|\mathbb{P}_{\alpha,\beta}\left(\overline{z}^s \sum_{k=s}^{+\infty} a_k z^{k-m}\right)\right\|^2_{\alpha,\beta}&=&\ds \frac{1}{\mathscr{B}(\alpha+1,\beta+1)}\sum_{k=s}^{+\infty}\sum_{j=s}^{+\infty} \frac{(\alpha+\beta_0+k+2)_{-s}}{(
\beta_0+k+1)_{-s}}\frac{(\alpha+\beta_0+j+2)_{-s}}{(
\beta_0+j+1)_{-s}} a_k \overline{a}_j\\
&&\ds \times \int_{\mathbb{D}} z^{k-s-m}\overline{z}^{j-s-m} |z|^{2\beta}(1-|z|^2)^{\alpha} dA(z).\\

\end{array}$$
Making the change to polar coordinates, we found
$$\begin{array}{lcl}
\ds \int_{\mathbb{D}} \overline{z}^{j-s-m}z^{k-s-m} |z|^{2\beta}(1-|z|^2)^{\alpha} dA(z)&=& \left\{
\begin{array}{lcl}
\ds \mathscr{B}(\alpha+1, k-s+\beta_0+1) & if& j= k\\
0& if& j\neq k\\
\end{array}
\right.
\end{array}$$
and the proof of Lemma \ref{l2} is accomplished.
\end{proof}
\section{Toeplitz operators on $\mathcal{A}_{\alpha,\beta}^p$}

Let $-1<\alpha,\beta<+\infty$ with $\beta=\beta_0+m$ and $m\in \mathbb{N}$. For $\varphi\in L^\infty(\mathbb{D})$, we define the Toeplitz operator $T^{\alpha,\beta}_\varphi$ on $\mathcal{A}_{\alpha,\beta}^p$ by $T^{\alpha,\beta}_\varphi(f)=\mathbb{P}_{\alpha,\beta}(\varphi f)$, where $\mathbb{P}_{\alpha,\beta}$ is the projection from $\textbf{L}^p(\mathbb{D},d\mu_{\alpha,\beta})$ onto $\mathcal{A}_{\alpha,\beta}^p$. In view of Corollary \ref{cor1} in section 2 and for $\beta_0\in\mathcal J_\alpha$, we have that $T^{\alpha,\beta}_\varphi$ is bounded on $\mathcal{A}_{\alpha,\beta}^p$ if and only if  $$\frac{2(\beta+1)}{2(\beta+1)-m}<p<\frac{2(\beta+1)}{m}.$$
Again if $m=0$ ($\beta=\beta_0\in\mathcal J_\alpha$) these inequalities are reduced to $p>1$.
In the sequel, if there is no ambiguity, we use $T_\varphi$ instead of $T^{\alpha,\beta}_\varphi$ and we fix $p$ such that $T_\varphi$  is continuous on $\mathcal{A}_{\alpha,\beta}^p$, i.e. the previous inequalities are satisfied.

\begin{prop}
Let $-1<\alpha,\beta<+\infty$ with $\beta =\beta_0+m$, $m\in \mathbb{N}$ and let $\varphi, \varphi_1, \varphi_2\in L^\infty(\mathbb{D},d\mu_{\alpha,\beta})$. Then

\begin{enumerate}
	\item $T_{\varphi_1+\varphi_2}=T_{\varphi_1}+T_{\varphi_2}.$
	\item $T_{\overline{\varphi_1}}T_{\varphi_2}=T_{\overline{\varphi_1}\varphi_2}$ if $\varphi_1$ or $\varphi_2$ is analytic.
	\item $(T_\varphi)^*=T_{\overline{\varphi}}$ where $T_{\overline{\varphi}}$ is the Toeplitz operator on $\mathcal{A}_{\alpha,\beta}^q$, with $q$ is the conjugate exponent of $p$.
\end{enumerate}

\end{prop}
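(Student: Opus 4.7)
The plan is to reduce all three assertions to direct computations using the definition $T_\varphi f = \mathbb{P}_{\alpha,\beta}(\varphi f)$, together with two basic properties of the projection: linearity, and the fact that $\mathbb{P}_{\alpha,\beta} h = h$ whenever $h \in \mathcal{A}_{\alpha,\beta}^p$. Throughout, I fix $p$ so that $T_\varphi$ is bounded on $\mathcal{A}_{\alpha,\beta}^p$ and $T_{\overline{\varphi}}$ is bounded on $\mathcal{A}_{\alpha,\beta}^q$ (by Corollary \ref{cor1} applied to $p$ and to its conjugate exponent $q$); the constraints in the corollary are symmetric in the relevant sense, so this is consistent.

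For (1), I would simply write $T_{\varphi_1+\varphi_2}f = \mathbb{P}_{\alpha,\beta}((\varphi_1+\varphi_2)f)$ and use linearity of $\mathbb{P}_{\alpha,\beta}$. For (2), I would treat the two subcases separately. If $\varphi_2$ is bounded and analytic, then for $f\in \mathcal{A}_{\alpha,\beta}^p$ the product $\varphi_2 f$ is again in $\mathcal{A}_{\alpha,\beta}^p$, hence $\mathbb{P}_{\alpha,\beta}(\varphi_2 f) = \varphi_2 f$, and the computation $T_{\overline{\varphi_1}}T_{\varphi_2}f = \mathbb{P}_{\alpha,\beta}(\overline{\varphi_1}\varphi_2 f) = T_{\overline{\varphi_1}\varphi_2}f$ is immediate. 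If instead $\varphi_1$ is analytic, I would dualise: test against an arbitrary $h \in \mathcal{A}_{\alpha,\beta}^q$ via the $\langle\cdot,\cdot\rangle_{\alpha,\beta}$ pairing and use that $\varphi_1 h \in \mathcal{A}_{\alpha,\beta}^q$, so $\mathbb{P}_{\alpha,\beta}(\varphi_1 h) = \varphi_1 h$; combined with self-adjointness of $\mathbb{P}_{\alpha,\beta}$, this collapses the chain $\langle T_{\overline{\varphi_1}}T_{\varphi_2}f, h\rangle_{\alpha,\beta} = \langle \varphi_2 f, \varphi_1 h\rangle_{\alpha,\beta} = \langle \overline{\varphi_1}\varphi_2 f, h\rangle_{\alpha,\beta}$ to the desired identity.

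For (3), the argument is the same duality computation in concentrated form: for $f \in \mathcal{A}_{\alpha,\beta}^p$ and $g \in \mathcal{A}_{\alpha,\beta}^q$,
\[
\langle T_\varphi f, g\rangle_{\alpha,\beta} = \langle \mathbb{P}_{\alpha,\beta}(\varphi f), g\rangle_{\alpha,\beta} = \langle \varphi f, g\rangle_{\alpha,\beta} = \langle f, \overline{\varphi} g\rangle_{\alpha,\beta} = \langle f, T_{\overline{\varphi}} g\rangle_{\alpha,\beta},
\]
where the middle equality uses that $g$ is already in the range of $\mathbb{P}_{\alpha,\beta}$ together with self-adjointness of the projection on $\textbf{L}^2$, and the final equality similarly pushes $\overline{\varphi} g$ against the holomorphic $f$.

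The only nontrivial point, and the one I would flag most carefully, is justifying the duality step in (2) and (3) on $L^p$–$L^q$ rather than on $L^2$. Since $\mathbb{P}_{\alpha,\beta}$ is only self-adjoint in its $L^2$ guise, I would verify that the integrals $\langle \varphi f, g\rangle_{\alpha,\beta}$, $\langle f, \overline{\varphi} g\rangle_{\alpha,\beta}$, etc., are absolutely convergent (which follows from boundedness of $\varphi$ and Hölder's inequality with exponents $p,q$), and then note that the identity $\langle \mathbb{P}_{\alpha,\beta}u, v\rangle_{\alpha,\beta} = \langle u, \mathbb{P}_{\alpha,\beta}v\rangle_{\alpha,\beta}$ extends from $L^2 \times L^2$ to $L^p \times L^q$ because both projections are bounded (Corollary \ref{cor1}) and $L^2$ is dense in both $L^p$ and $L^q$. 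Apart from this technicality the proof is a sequence of one-line identities, so the writeup should be short.
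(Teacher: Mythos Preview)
Your proposal is correct and follows essentially the same approach as the paper: linearity for (1), and the duality computation $\langle T_\varphi f, g\rangle_{\alpha,\beta} = \langle \varphi f, g\rangle_{\alpha,\beta} = \langle f, T_{\overline{\varphi}} g\rangle_{\alpha,\beta}$ for (3), using that $f,g$ are fixed by $\mathbb{P}_{\alpha,\beta}$ and that the projection is self-adjoint. For (2) the paper writes out the kernel integrals explicitly and applies Fubini (treating only the case $\varphi_1$ analytic), whereas you argue more abstractly and also handle the $\varphi_2$-analytic subcase directly via $\mathbb{P}_{\alpha,\beta}(\varphi_2 f)=\varphi_2 f$; your treatment is slightly more complete and your explicit density argument for extending self-adjointness to the $L^p$--$L^q$ pairing is a point the paper leaves implicit.
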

\begin{proof}
The first statement is obvious by using the linearity of integral. To prove the second one, we suppose that $\varphi_1$ is analytic and we fix
 $f\in \mathcal{A}_{\alpha,\beta}^p$ and $g\in \mathcal{A}_{\alpha,\beta}^q$. Since $\mathbb{P}_{\alpha,\beta}(g)=g$ and using the fact that $\varphi_1$ is analytic and by Fubini's Theorem, we perform
\begin{align*}
\left\langle T_{\overline{\varphi_1}}T_{\varphi_2} f,g\right\rangle_{\alpha,\beta}&=\int_{\mathbb{D}} \bigg[\int_{\mathbb{D}}\overline{\varphi_1}(w)T_{\varphi_2} f(w)\overline{\mathcal{K}_{\alpha,\beta}(w\overline{z})}d\mu_{\alpha,\beta}(w)\bigg]\ \overline{g}(z)d\mu_{\alpha,\beta}(z).\\
&= \int_{\mathbb{D}} \overline{\varphi_1}(w)T_{\varphi_2} f(w)\overline{g(w)}d\mu_{\alpha,\beta}(w)\\
&=\int_{\mathbb{D}}\overline{\varphi_1}(w) \bigg[\int_{\mathbb{D}}\overline{\varphi_2}(\xi)f(\xi)\overline{\mathcal{K}_{\alpha,\beta}(\xi\overline{w})}d\mu_{\alpha,\beta}(\xi)\bigg]\ \overline{g}(w)d\mu_{\alpha,\beta}(w)\\
&=\int_{\mathbb{D}}\varphi_2(\xi)f(\xi)\overline{\varphi_1}(\xi) \overline{g}(\xi)d\mu_{\alpha,\beta}(\xi)\\
&= \left\langle \overline{\varphi_1}\varphi_2 f,\mathbb{P}(g)\right\rangle_{\alpha,\beta}\\
&=\left\langle T_{\overline{\varphi_1}\varphi_2}f, g\right\rangle_{\alpha,\beta}.
\end{align*}

Now, for the last one we fix
 $f\in \mathcal{A}_{\alpha,\beta}^p$ and $g\in \mathcal{A}_{\alpha,\beta}^q$. Then,
\begin{align*}
\left\langle T_{\varphi} f,g\right\rangle_{\alpha,\beta}&=\int_{\mathbb{D}} T_{\varphi} f(z) \overline{g(z)}\ d\mu_{\alpha,\beta}(z)\\
&=\int_{\mathbb{D}} \bigg[\int_{\mathbb{D}}\frac{\varphi(w)f(w)Q_{\alpha,\beta}(z\overline{w})}{(z\overline{w})^m(1-z\overline{w})^{\alpha+2}}d\mu_{\alpha,\beta}(w)\bigg]\ \overline{g}(z)d\mu_{\alpha,\beta}(z).\\
\end{align*}
Applying Fubini's Theorem and using the fact that $\mathbb{P}_{\alpha,\beta}(g)=g$ and $\mathbb{P}_{\alpha,\beta}(f)=f$, since  $f\in \mathcal{A}_{\alpha,\beta}^p$ and $g\in \mathcal{A}_{\alpha,\beta}^q$, and the fact that $P_{\alpha,\beta}$ is self-adjoint with respect to the inner product associated with $d\mu_{\alpha,\beta}$,  we obtain
\begin{align*}
\left\langle T_{\varphi} f,g\right\rangle_{\alpha,\beta}
&=\int_{\mathbb{D}}\varphi(w)f(w) \bigg[\int_{\mathbb{D}}\frac{\overline{g}(z) Q_{\alpha,\beta}(z\overline{w})}{(z\overline{w})^m(1-z\overline{w})^{\alpha+2}}d\mu_{\alpha,\beta}(z)\bigg]\ d\mu_{\alpha,\beta}(w)\\
&=\int_{\mathbb{D}}\varphi(w)f(w) \overline{g}(w) d\mu_{\alpha,\beta}(w)\\
&=\int_{\mathbb{D}}\varphi(w)f(w) \overline{g}(w) d\mu_{\alpha,\beta}(w)\\
&= \left\langle f,\overline{\varphi}g\right\rangle_{\alpha,\beta}\\
&= \left\langle f,\mathbb{P}(\overline{\varphi}g)\right\rangle_{\alpha,\beta}\\
&=\left\langle f,T_{\overline{\varphi}}g\right\rangle_{\alpha,\beta}.
\end{align*}
Therefore $(T_\varphi)^*=T_{\overline{\varphi}}.$
\end{proof}

\begin{lem}\label{lem4.2}
Let $-1<\alpha,\beta<+\infty$ with $\beta =\beta_0+m$, $m\in \mathbb{N}$ and $\beta_0\in \mathcal J_\alpha$. Assume that $\varphi\in L^\infty(\mathbb{D})$ be a function with compact support. Then $T_\varphi$ is a compact operator on $\mathcal{A}_{\alpha,\beta}^p$.
\end{lem}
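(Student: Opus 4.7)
The plan is to use the sequential criterion for compactness: every bounded sequence $(f_n)$ in $\mathcal{A}_{\alpha,\beta}^p$ should admit a subsequence whose image under $T_\varphi$ converges in norm.

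First, I would extract a pointwise-convergent subsequence via normal families. Setting $\tilde{f}_n(z):=z^m f_n(z)$, the operator $M$ of Section~2 identifies $\tilde{f}_n$ (up to a fixed constant) with an element of $\mathcal{A}_{\alpha,\beta_0}^p$, where $\beta_0\in(-1,0]$; this is a classical-type Bergman space in which the standard pointwise estimate $|\tilde{f}_n(z)|\leq C_L\|\tilde{f}_n\|_{\alpha,\beta_0,p}$ holds uniformly for $z$ in any compact $L\subset\mathbb{D}$. Montel's theorem then yields a subsequence $\tilde{f}_{n_k}$ converging uniformly on compact subsets of $\mathbb{D}$ to some holomorphic $\tilde{f}$, with $\tilde{f}\in\mathcal{A}_{\alpha,\beta_0}^p$ by Fatou's lemma. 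Setting $f(z):=z^{-m}\tilde{f}(z)\in\mathcal{A}_{\alpha,\beta}^p$, we obtain $f_{n_k}\to f$ uniformly on compact subsets of $\mathbb{D}^*$.

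Next, I would establish $\|\varphi(f_{n_k}-f)\|_{\alpha,\beta,p}\to 0$. Let $K$ denote the compact support of $\varphi$ in $\mathbb{D}$. If $0\notin K$, this is immediate from the uniform convergence of $f_{n_k}\to f$ on $K$ together with the finiteness of $\mu_{\alpha,\beta}(K)$. If $0\in K$, I split $K$ into an inner part $\{|z|<\epsilon\}\cap K$ and the outer part $K\setminus\{|z|<\epsilon\}$. On the outer piece the uniform convergence closes the argument. On the inner piece I invoke the pointwise bound $|f_{n_k}(z)-f(z)|\leq C|z|^{-m}$ inherited from the uniform boundedness of $\tilde{f}_{n_k}$ near $0$, which reduces matters to $\int_{|z|<\epsilon}|z|^{2\beta-mp}(1-|z|^2)^\alpha\,dA(z)$. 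This integral is finite precisely thanks to the exponent condition $2\beta>pm-2$ forced by Corollary~\ref{cor1}, and it tends to $0$ as $\epsilon\to 0^+$.

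Finally, the boundedness of $\mathbb{P}_{\alpha,\beta}$ on $\textbf{L}^p(\mathbb{D},d\mu_{\alpha,\beta})$ given by Corollary~\ref{cor1} delivers
$$\|T_\varphi(f_{n_k}-f)\|_{\alpha,\beta,p}=\|\mathbb{P}_{\alpha,\beta}(\varphi(f_{n_k}-f))\|_{\alpha,\beta,p}\lesssim \|\varphi(f_{n_k}-f)\|_{\alpha,\beta,p}\longrightarrow 0,$$
which completes the proof. The main obstacle I anticipate is the controlled handling of the origin: one must leverage the isomorphism $f\leftrightarrow z^m f$ with a classical-type Bergman space $\mathcal{A}_{\alpha,\beta_0}^p$ to obtain a uniform $|z|^{-m}$ pointwise bound on bounded sequences, and then observe that the very weighted integrability condition which makes $\mathbb{P}_{\alpha,\beta}$ bounded on $\textbf{L}^p$ is exactly what enables the dominated-convergence argument near the origin.
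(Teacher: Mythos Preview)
Your overall strategy---clear the pole at $0$, extract a Montel subsequence, then use the compact support of $\varphi$ together with the boundedness of $\mathbb P_{\alpha,\beta}$---is exactly the paper's approach.

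There is, however, a genuine gap in your choice of the power of $z$. The map $M$ of Section~2 is an isometry $\mathcal A_{\alpha,\beta}^2\to\mathcal A_{\alpha,\beta_0}^2$ only for $p=2$; for general $p$ it neither preserves the norm nor even necessarily lands in $\mathcal A_{\alpha,\beta_0}^p$. More importantly, multiplying by $z^m$ need not remove the pole at $0$. The maximal pole order of an element of $\mathcal A_{\alpha,\beta}^p$ is the integer $m_{p,\beta}$ of~\cite{Gh-Za}, and under the admissibility hypothesis $\frac{2(\beta+1)}{2(\beta+1)-m}<p<\frac{2(\beta+1)}{m}$ one checks that $m<\frac{2(\beta+1)}{p}<2\beta_0+m+2\le m+2$, so $m_{p,\beta}\in\{m,m+1\}$, with $m_{p,\beta}=m+1$ precisely when $p<\frac{2(\beta+1)}{m+1}$ (e.g.\ $\beta_0=0$, $m=1$, $p=3/2$ gives $m_{p,\beta}=2$). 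In that regime $\tilde f_n=z^m f_n$ can retain a simple pole at $0$, so it is not holomorphic on $\mathbb D$, Montel's theorem does not apply on compacts containing $0$, and your bound $|f_{n_k}-f|\le C|z|^{-m}$ is unavailable. The fix is immediate: replace $m$ by $m_{p,\beta}$ throughout, as the paper does (it phrases this via the individual pole order $\nu_f\le m_{p,\beta}$). Then $z^{m_{p,\beta}}f_n$ is genuinely holomorphic on $\mathbb D$, and your integrability check near $0$ becomes $2\beta-p\,m_{p,\beta}>-2$, which is precisely the defining inequality $m_{p,\beta}<\frac{2(\beta+1)}{p}$. With this correction your argument goes through, and your explicit splitting near the origin is in fact more careful than the paper's somewhat informal treatment of the same point.
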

\begin{proof}
Suppose that  $Supp\ \varphi=K$ is a compact subset of $\mathbb{D}$ and let $(f_n)_{n\geq0}$ be a bounded sequence in $\mathcal{A}_{\alpha,\beta}^p$. Let $m_{p,\beta}$ be the integer defined in \cite{Gh-Za} by
$$m_{p,\beta}=\left\{\begin{array}{lcl}
                        \ds\left\lfloor\frac{2(\beta+1)}{p}\right\rfloor & if & \ds\frac{2(\beta+1)}{p}\not\in\mathbb N\\
                        \ds\frac{2(\beta+1)}{p}-1& if & \ds\frac{2(\beta+1)}{p}\in\mathbb N
                      \end{array}\right.$$
 such that if $f\in\mathcal A_{\alpha,\beta}^p$ then 0 is a pole of $f$ with order $\nu_f=\nu_f(0)$ that satisfies
$\nu_f\leq m_{p,\beta}$. Then for every $f\in \mathcal{A}_{\alpha,\beta}^p$ the function  $\widetilde{f}=z^{\nu_f} f$ is holomorphic on $\mathbb D$.  Using Proposition $1$ in \cite{Gh-Za}, we obtain that the sequence $(\widetilde{f}_n)_n$ is uniformly bounded on each compact subset of $\mathbb{D}$. On account of Montel's theorem, there exists a subsequence $(\widetilde{f}_{n_k})_k$ of  $(\widetilde{f}_n)_{n\geq0}$ that converges uniformly  to $\widetilde{f}$ on $K$. Therefore if we set $$C=\frac{\mathscr{B}(\alpha+1,\beta-\frac{p\nu_f}{2}+1)}{\mathscr{B}(\alpha+1,\beta+1)}$$ we obtain
\begin{align*}
\left\|\varphi f_{n_k}-\varphi f\right\|^p_{\alpha,\beta,p}&=C \left\|\varphi \tilde{f}_{n_k}-\varphi \tilde{f}\right\|^p_{\alpha,\beta-\frac{p\nu_f}{2},p}\\
&=C \int_{K} |\varphi(z)|^p\ |\tilde{f}_{n_k}(z)- \tilde{f}(z)|^p d\mu_{\alpha,\beta-\frac{p\nu_f}{2}}(z)\\
&\leq C \left\|\varphi\right\|_{\infty}^p \sup_{z\in K}|\tilde{f}_{n_k}(z)- \tilde{f}(z)|^p \mu_{\alpha,\beta-\frac{p\nu_f}{2}}(K),
\end{align*}
Since $$\sup_{z\in K}|\tilde{f}_{n_k}(z)- \tilde{f}(z)|^p\rightarrow 0, \qquad k\rightarrow +\infty,$$ the sequence $(\varphi f_{n_k})_k$ converges in $\textbf{L}^p(\mathbb{D},d\mu_{\alpha,\beta})$ to $\varphi f$. Now since the projection $\mathbb P_{\alpha,\beta}$ is continuous on $\textbf{L}^p(\mathbb{D},d\mu_{\alpha,\beta})$ then  $\mathbb P_{\alpha,\beta}(\varphi f_{n_k})$ converges in $\mathcal{A}_{\alpha,\beta}^p$ to $\mathbb P_{\alpha,\beta}(\varphi f)$. Hence the operator $T_\varphi$ is compact on $\mathcal{A}_{\alpha,\beta}^p$.
\end{proof}

Applying Lemma \ref{l3}, we obtain

\begin{align*}
\left\| K_{\alpha,\beta}(.,z)\right\|_{\alpha,\beta,p}
&=\frac{1}{\mathscr{B}^{\frac{1}{p}}(\alpha+1,\beta+1)\ |z|^{m}}\bigg(\int_{\mathbb{D}}\frac{|Q_{\alpha,\beta}(z\overline{w})|^p\ (1-|w|^2)^\alpha\ |w|^{2\beta-mp}}{|1-z\overline{w}|^{(2+\alpha)p}} dA(w)\bigg)^{\frac{1}{p}}\\
&\approx \frac{1}{\mathscr{B}^{\frac{1}{p}}(\alpha+1,\beta+1)}\frac{1}{ |z|^{m}\ (1-|z|^2)^{(2+\alpha)(\frac{1}{q})}},\quad as\ |z|\rightarrow 1^-,
\end{align*}
where $q$ is the conjugate exponent of $p$.\\
Set $k_{\alpha,\beta,p}^z$ be the normalized reproducing kernel at $z\in \mathbb{D}^*$:
$$k_{\alpha,\beta,p}^z(w)=\frac{K_{\alpha,\beta}(w,z)}{\left\| K_{\alpha,\beta}(.,z)\right\|_{\alpha,\beta,p}}$$ which has the following property:
\begin{prop}\label{prop4.2}
$k_{\alpha,\beta,p}^z$ converges to $0$ weakly in  $\mathcal{A}_{\alpha,\beta}^p$ as $|z|$ tends to $1^-$.
\end{prop}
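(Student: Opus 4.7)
The plan is to combine three standard ingredients: the uniform bound $\|k_{\alpha,\beta,p}^z\|_{\alpha,\beta,p}=1$, the divergence of $\|\mathbb{K}_{\alpha,\beta}(\cdot,z)\|_{\alpha,\beta,p}$ as $|z|\to 1^-$, and a density argument in the dual. By Corollary \ref{cor1}, $\mathbb{P}_{\alpha,\beta}$ is bounded on $\textbf{L}^p(\mathbb{D},d\mu_{\alpha,\beta})$ for the admissible range of $p$, so the pairing $\langle\cdot,\cdot\rangle_{\alpha,\beta}$ identifies the dual of $\mathcal{A}_{\alpha,\beta}^p$ with $\mathcal{A}_{\alpha,\beta}^q$, where $q$ is the conjugate exponent of $p$. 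The reproducing identity then extends from $\mathcal{A}_{\alpha,\beta}^2$ to the whole of $\mathcal{A}_{\alpha,\beta}^q$ by approximation (for example, through dilates $g_r(w)=g(rw)$), giving for every $g\in\mathcal{A}_{\alpha,\beta}^q$:
$$\langle k_{\alpha,\beta,p}^z,g\rangle_{\alpha,\beta}=\frac{1}{\|\mathbb{K}_{\alpha,\beta}(\cdot,z)\|_{\alpha,\beta,p}}\int_{\mathbb{D}}\mathbb{K}_{\alpha,\beta}(w,z)\,\overline{g(w)}\,d\mu_{\alpha,\beta}(w)=\frac{\overline{g(z)}}{\|\mathbb{K}_{\alpha,\beta}(\cdot,z)\|_{\alpha,\beta,p}}.$$

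I would first verify that this quantity tends to $0$ for $g$ in a dense subset of $\mathcal{A}_{\alpha,\beta}^q$, namely the Laurent polynomials $g(w)=\sum_{k=-m_{q,\beta}}^{N}a_k w^k$, where $m_{q,\beta}$ is the integer introduced in the proof of Lemma \ref{lem4.2}. Such $g$ belong to $\mathcal{A}_{\alpha,\beta}^q$, and after multiplication by $w^{m_{q,\beta}}$ the density reduces to the classical fact that ordinary polynomials are dense in a standard weighted Bergman space on $\mathbb{D}$. For each such $g$, $|g(z)|$ remains bounded as $|z|\to 1^-$, while the asymptotic displayed just before the proposition gives
$$\|\mathbb{K}_{\alpha,\beta}(\cdot,z)\|_{\alpha,\beta,p}\approx \frac{1}{|z|^m(1-|z|^2)^{(2+\alpha)/q}}\longrightarrow +\infty,$$
since $\alpha>-1$ forces $(2+\alpha)/q>0$. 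Hence $\langle k_{\alpha,\beta,p}^z,g\rangle_{\alpha,\beta}\to 0$ on this dense subset, and a standard $\varepsilon/3$-argument, using the uniform bound $\|k_{\alpha,\beta,p}^z\|_{\alpha,\beta,p}=1$, upgrades the conclusion to every $g\in\mathcal{A}_{\alpha,\beta}^q$. That is precisely the weak convergence of $k_{\alpha,\beta,p}^z$ to $0$ in $\mathcal{A}_{\alpha,\beta}^p$.

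The main obstacle is not the central analytic estimate, which is immediate from Lemma \ref{l3} via the norm asymptotic, but the preparatory bookkeeping: one needs the identification $(\mathcal{A}_{\alpha,\beta}^p)^\star\simeq \mathcal{A}_{\alpha,\beta}^q$ through $\langle\cdot,\cdot\rangle_{\alpha,\beta}$ and the validity of the reproducing formula on all of $\mathcal{A}_{\alpha,\beta}^q$. Both are standard consequences of the boundedness of $\mathbb{P}_{\alpha,\beta}$ supplied by Corollary \ref{cor1} together with the fact that $\mathbb{K}_{\alpha,\beta}(\cdot,z)\in \mathcal{A}_{\alpha,\beta}^p$ (itself a consequence of Lemma \ref{l3}, as the norm computation displayed before the proposition already demonstrates); the only delicate point is the approximation step for the reproducing identity, easily handled by dilation of $g$ and dominated convergence against the kernel.
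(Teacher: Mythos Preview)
Your proof is correct and follows essentially the same approach as the paper: test the pairing against a dense subset of $\mathcal{A}_{\alpha,\beta}^q$ via the reproducing identity and the kernel-norm asymptotic, then extend by density together with the unit-norm bound $\|k_{\alpha,\beta,p}^z\|_{\alpha,\beta,p}=1$. The only cosmetic differences are the choice of dense class (you use Laurent polynomials, the paper uses $g_h=h/z^{m_{q,\beta}}$ with $h$ bounded holomorphic on $\mathbb{D}$) and that you spell out the duality and reproducing-formula bookkeeping more explicitly than the paper does.
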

\begin{proof}

Let $g_h(z)=\frac{h(z)}{z^{m_{q,\beta}}}$ be the holomorphic function on $\mathbb{D}^*$ where $h$ is a holomorphic bounded function on $\mathbb{D}$. When $|z|$ tends to $1^-$, we obtain
\begin{equation}\label{eq41}
\langle g_h,k_{\alpha,\beta,p}^z\rangle_{\alpha,\beta}\approx \mathscr{B}^{\frac{1}{p}}(\alpha+1,\beta+1) |z|^{m-m_{q,\beta}}\ (1-|z|^2)^{(2+\alpha)(\frac{1}{q})} h(z),
\end{equation}
which is clearly tends to $0$. Since the set of functions $g_h$, with $h$ bounded, is dense in the Bergman space $\mathcal{A}_{\alpha,\beta}^q$, we conclude that $k_{\alpha,\beta,p}^z$ converges to $0$ weakly in  $\mathcal{A}_{\alpha,\beta}^p$ as $|z|\longrightarrow 1^-$.
\end{proof}

 This property will be useful in the following theorem when we announce a second result on the compactness of Toeplitz operators.
\begin{thm}\label{theo2}
Assume $-1<\alpha,\beta<+\infty$ with $\beta =\beta_0+m$, $m\in \mathbb{N}$ and $\beta_0\in \mathcal J_\alpha$. Let $\varphi\in C(\mathbb{\overline{D}})$. Then $T_\varphi$ is a compact operator on $\mathcal{A}_{\alpha,\beta}^p$ if and only if $\varphi|_{\partial \mathbb{D}}=0$.
\end{thm}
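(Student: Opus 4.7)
The plan is to prove the two implications separately. For \textbf{sufficiency}, assume $\varphi|_{\partial\mathbb{D}}=0$ and approximate $T_\varphi$ by operators with compactly supported symbols. Set $\varphi_n(z)=\varphi(z)\mathbf{1}_{\{|z|\leq 1-1/n\}}(z)$. Since $\varphi$ is uniformly continuous on $\overline{\mathbb{D}}$ and vanishes on the boundary, $\|\varphi-\varphi_n\|_\infty=\sup_{|z|>1-1/n}|\varphi(z)|\to 0$. Each $T_{\varphi_n}$ is compact by Lemma \ref{lem4.2}, and writing $T_{\varphi-\varphi_n}f=\mathbb{P}_{\alpha,\beta}((\varphi-\varphi_n)f)$ combined with the boundedness of $\mathbb{P}_{\alpha,\beta}$ on $\textbf{L}^p(\mathbb{D},d\mu_{\alpha,\beta})$ (Corollary \ref{cor1}, valid in the admissible range of $p$) yields $\|T_\varphi-T_{\varphi_n}\|\leq\|\mathbb{P}_{\alpha,\beta}\|\,\|\varphi-\varphi_n\|_\infty\to 0$, so $T_\varphi$ is compact as a norm limit of compact operators.

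For \textbf{necessity}, assume $T_\varphi$ is compact. Writing $k_p^z:=k_{\alpha,\beta,p}^z$ and letting $q$ be the conjugate exponent of $p$, Proposition \ref{prop4.2} gives $k_p^z\rightharpoonup 0$ in $\mathcal{A}_{\alpha,\beta}^p$ as $|z|\to 1^-$, so $\|T_\varphi k_p^z\|_{\alpha,\beta,p}\to 0$. For $f\in\mathcal{A}^p_{\alpha,\beta}$ and $g\in\mathcal{A}^q_{\alpha,\beta}$, a Fubini argument (using $\mathbb{P}_{\alpha,\beta}g=g$ and the conjugate symmetry of the reproducing kernel) yields $\langle T_\varphi f,g\rangle_{\alpha,\beta}=\int_{\mathbb{D}}\varphi f\,\overline{g}\,d\mu_{\alpha,\beta}$. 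Plugging in $f=k_p^z$, $g=k_q^z$ and using H\"older with $\|k_p^z\|_{\alpha,\beta,p}=\|k_q^z\|_{\alpha,\beta,q}=1$ produces the generalized Berezin transform
$$B_\varphi(z):=\int_{\mathbb{D}}\varphi(w)\,k_p^z(w)\,\overline{k_q^z(w)}\,d\mu_{\alpha,\beta}(w),\qquad |B_\varphi(z)|\leq\|T_\varphi k_p^z\|_{\alpha,\beta,p}\to 0.$$
It remains to rule out $\varphi(\zeta_0)\neq 0$ for any $\zeta_0\in\partial\mathbb{D}$ by showing $\liminf_{z\to\zeta_0}|B_\varphi(z)|>0$ in that case.

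Fix $\zeta_0\in\partial\mathbb{D}$ and split $B_\varphi(z)=\varphi(\zeta_0)\,B_1(z)+R(z)$, where $B_1(z)=\langle k_p^z,k_q^z\rangle_{\alpha,\beta}$ and $R$ carries the factor $\varphi-\varphi(\zeta_0)$. From (\ref{eq2}) and the norm asymptotics preceding Proposition \ref{prop4.2}, $B_1(z)=\mathbb{K}_{\alpha,\beta}(z,z)/(\|\mathbb{K}_{\alpha,\beta}(\cdot,z)\|_p\|\mathbb{K}_{\alpha,\beta}(\cdot,z)\|_q)\approx\mathscr{B}(\alpha+1,\beta+1)\,Q_{\alpha,\beta}(|z|^2)$, which stays bounded away from $0$ as $|z|\to 1^-$ provided $Q_{\alpha,\beta}(1)\neq 0$. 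For $R$, given $\varepsilon>0$ pick a neighborhood $U$ of $\zeta_0$ where $|\varphi-\varphi(\zeta_0)|<\varepsilon$; H\"older together with the normalization bounds the integral over $U$ by $\varepsilon$, while the kernel estimates from Lemma \ref{l3} yield $|k_p^z(w)\,\overline{k_q^z(w)}|\lesssim(1-|z|^2)^{2+\alpha}|w|^{-2m}$ uniformly for $w\in\mathbb{D}\setminus U$, so the integral over $\mathbb{D}\setminus U$ vanishes as $z\to\zeta_0$. Hence $\liminf_{z\to\zeta_0}|B_\varphi(z)|\geq c\,|\varphi(\zeta_0)|$ for some $c>0$, and $B_\varphi(z)\to 0$ forces $\varphi(\zeta_0)=0$. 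The main obstacle I expect is this lower bound on $B_1$: the hypothesis $\beta_0\in\mathcal{J}_\alpha$ only rules out interior zeros of $Q_{\alpha,\beta}$, so non-vanishing near $\xi=1$ requires a separate argument, obtained from the absolute convergence of the series defining $Q_{\alpha,\beta}$ on $\overline{\mathbb{D}}$ together with a direct evaluation at $\xi=1$.
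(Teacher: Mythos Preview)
Your proof is correct and shares the paper's overall architecture: sufficiency via approximation by compactly supported symbols (Lemma~\ref{lem4.2}), necessity by showing the Berezin-type quantity $\langle T_\varphi k_{\alpha,\beta,p}^z,k_{\alpha,\beta,q}^z\rangle_{\alpha,\beta}$ tends to a nonzero multiple of $\varphi(\zeta_0)$ as $z\to\zeta_0$. The difference lies in how that limit is computed. The paper makes the M\"obius change of variable $\zeta=\psi_z(w)$ to recast the integral in a form where dominated convergence applies directly, obtaining the explicit limit $\varphi(\zeta_0)\,|Q_{\alpha,\beta}(1)|^2\int_{\mathbb D}|w|^{-2\beta}\,d\mu_{\alpha,\beta}(w)$. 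You instead split $B_\varphi(z)=\varphi(\zeta_0)B_1(z)+R(z)$, handle $B_1$ via the reproducing property and norm asymptotics, and control $R$ by a near/far decomposition around $\zeta_0$. Your route is more elementary in that it avoids the M\"obius transform and the associated Jacobian bookkeeping with the non-invariant weight $|z|^{2\beta}$; the paper's route is more explicit about the limiting constant. Both ultimately rely on $Q_{\alpha,\beta}(1)\neq 0$, which your final remark correctly flags and which follows from Gauss's evaluation ${}_2F_1(\beta_0,-(\alpha+1);\beta_0+1;1)=(\alpha+1)\mathscr B(\alpha+1,\beta_0+1)$, giving $Q_{\alpha,\beta}(1)=(\alpha+1)\mathscr B(\alpha+1,\beta+1)$ independently of the hypothesis $\beta_0\in\mathcal J_\alpha$.
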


\begin{proof}
For the sufficient condition, we assume that $\varphi_{|\partial \mathbb{D}}\equiv0$. Then $\varphi$ can be uniformly approximated by functions with compact supports in $\mathbb{D}$. So, we deduce immediately from Lemma \ref{lem4.2} that  $T_\varphi$ is compact.

Now for the necessary condition, we suppose that $T_\varphi$ is compact on $\mathcal{A}_{\alpha,\beta}^p$.
We take $\xi\in \partial \mathbb{D}$. Thanks to  Proposition \ref{prop4.2}, we have $k_{\alpha,\beta,p}^z$ converges to $0$ weakly in  $\mathcal{A}_{\alpha,\beta}^p$ as $z$ tends to $\xi$. Therefore,
$$\langle T_\varphi k_{\alpha,\beta,p}^z,k_{\alpha,\beta,q}^z\rangle_{\alpha,\beta}=\left\langle T_\varphi \frac{\mathbb K_{\alpha,\beta}(.,z)}{\left\| \mathbb K_{\alpha,\beta}(.,z)\right\|_{\alpha,\beta,p}},\frac{\mathbb K_{\alpha,\beta}(.,z)}{\left\| K_{\alpha,\beta}(.,z)\right\|_{\alpha,\beta,q}} \right\rangle_{\alpha,\beta}\underset{z\to \xi}\longrightarrow 0.$$
Otherwise, one has
\begin{align*}
&\langle T_\varphi k_{\alpha,\beta,p}^z,k_{\alpha,\beta,q}^z \rangle_{\alpha,\beta}\\
&=\int_{\mathbb{D}}\bigg(\int_{\mathbb{D}}\frac{\varphi(\zeta)\ \mathbb K_{\alpha,\beta}(\zeta,z)\ Q_{\alpha,\beta}(w\overline{\zeta})}{(w\overline{\zeta})^m (1-w\overline{\zeta})^{\alpha+2}\ \left\| \mathbb K_{\alpha,\beta}(.,z)\right\|_{\alpha,\beta,p}}d\mu_{\alpha,\beta}(\zeta)\bigg) \frac{\overline{\mathbb K_{\alpha,\beta}(w,z)}}{\left\| \mathbb K_{\alpha,\beta}(.,z)\right\|_{\alpha,\beta,q}}d\mu_{\alpha,\beta}(w)\\
&=\int_{\mathbb{D}}\frac{\varphi(\zeta)\ \mathbb K_{\alpha,\beta}(\zeta,z)}{\left\| \mathbb K_{\alpha,\beta}(.,z)\right\|_{\alpha,\beta,p}\ \left\| \mathbb K_{\alpha,\beta}(.,z)\right\|_{\alpha,\beta,q}}\bigg(\int_{\mathbb{D}}\frac{\overline{\mathbb K_{\alpha,\beta}(w,z)}\ Q_{\alpha,\beta}(w\overline{\zeta})}{(w\overline{\zeta})^m (1-w\overline{\zeta})^{\alpha+2}\ }d\mu_{\alpha,\beta}(w)\bigg) d\mu_{\alpha,\beta}(\zeta)\\
&=\int_{\mathbb{D}}\frac{\varphi(\zeta)\ \mathbb K_{\alpha,\beta}(\zeta,z)}{\left\| \mathbb K_{\alpha,\beta}(.,z)\right\|_{\alpha,\beta,p}\ \left\| \mathbb K_{\alpha,\beta}(.,z)\right\|_{\alpha,\beta,q}}\ \overline{\mathbb K_{\alpha,\beta}(\zeta,z)} d\mu_{\alpha,\beta}(\zeta)\\
&\simeq  |z|^{2m}\ \int_{\mathbb{D}} \varphi(\zeta)\ |\mathbb K_{\alpha,\beta}(\zeta,z)|^2\ (1-|z|^2)^{(2+\alpha)}\ d\mu_{\alpha,\beta}(\zeta).\\
\end{align*}

Now, if we make the change of variable $\zeta=\varphi_z(w)$ in the last integral, we obtain

\begin{align*}
&\langle T_\varphi k_{\alpha,\beta,p}^z,k_{\alpha,\beta,q}^z \rangle_{\alpha,\beta}\simeq  \ \int_{\mathbb{D}} \varphi(\zeta)\ \frac{|Q_{\alpha,\beta}(\overline{z}\zeta)|^2(1-|z|^2)^{\alpha+2}}{|\zeta|^{2m} |1-\overline{z}\zeta|^{2(2+\alpha)}}d\mu_{\alpha,\beta}(\zeta)\\
&\simeq  \ \int_{\mathbb{D}} (\varphi\circ\varphi_z)(w)\ \frac{|Q_{\alpha,\beta}(\overline{z}\varphi_z(w))|^2\ |\varphi_z(w)|^{2\beta-2m}}{|w|^{2\beta}}  d\mu_{\alpha,\beta}(w).
\end{align*}
Since $\varphi_z(w)\longrightarrow \xi$, as $z\longrightarrow \xi$ then by applying the dominated convergence theorem, we find
$$\langle T^{\alpha,\beta}_\varphi k_{\alpha,\beta,p}^z,k_{\alpha,\beta,q}^z\rangle_{\alpha,\beta}\underset{z\to \xi}\simeq \varphi(\xi)|Q_{\alpha,\beta}(1)|^2 \int_{\mathbb{D}} \frac{1}{|w|^{2\beta}}  d\mu_{\alpha,\beta}(w).$$
Thus, $\varphi(\xi)=0$,  for every $\xi\in \partial\mathbb{D}$.
\end{proof}
	\section{Carleson measures on $\mathcal{A}_{\alpha,\beta}^p$}
	To more understand the properties of Toeplitz and Berezin operators and to see the relationship between them, we will define them in a more general situation. Indeed they will  be defined for what we call Carleson measures. In order to characterize the boundedness and compactness of these operators, we need to introduce a new Bergman metric inherited by the Bergman kernel $\mathbb{K}_{\alpha,\beta}$ and we compare it with the classical Bergman-Poincar\'{e} metric. Namely, we will prove that they are equivalent while the new one has a negative (non-constant) curvature.
\subsection{Modified Bergman-Poincar\'{e} metric}
The aim of this part is the construction of a new Bergman metric $\textbf{d}_{\alpha,\beta}$, called modified Bergman-Poincar\'{e} metric. Indeed, we will prove that the function $\kappa_{\alpha,\beta}(z):=\log(\mathbb K_{\alpha,\beta}(z,z))=\log(\mathcal K_{\alpha,\beta}(|z|^2))$ is well defined and $\mathcal C^\infty$ subharmonic on $\mathbb D$. Thus if we set
$$\varrho^2_{\alpha,\beta}(z)=\frac{\partial^2 \kappa_{\alpha,\beta}}{\partial z\partial\overline{z}}(z),$$
we obtain $\varrho^2_{\alpha,\beta}(z)\geq 0$ for every $ z\in\mathbb D.$ Before we assert the results of this subsection, we need a some preparation. The results in the following lemma are similar to the ones in \cite[Corollary 2.3]{Gh-Sn}, the only difference between them is the fact that the next result will be proved for all $-1<\beta<+\infty$ while the other is proved for $-1<\beta<0$. The proofs are almost the sames.
\begin{lem}\label{l4.1}
        For every $-1<\alpha,\beta<+\infty$, we set  $$H_{\alpha,\beta}(\xi)=\beta\sum_{n=0}^{+\infty}\frac{(-1)^n}{n+\beta}{\alpha+1\choose n}\xi^n.$$ Then $H_{\alpha,\beta}$ satisfies:
        $$\ds\xi H'_{\alpha,\beta}(\xi)=\ds \beta\left((1-\xi)^{\alpha+1}-H_{\alpha,\beta}(\xi)\right) $$
        and $$\begin{array}{lcl}
             H_{\alpha+1,\beta}(\xi)&=&\ds\frac{1}{\alpha+\beta+2}\left(\xi(1-\xi)H_{\alpha,\beta}'(\xi)+(\alpha+\beta+2-\beta\xi) H_{\alpha,\beta}(\xi)\right)\\
         &=&\ds\frac{1}{\alpha+\beta+2}\left((\alpha+2)H_{\alpha,\beta}(\xi)+\beta(1-\xi)^{\alpha+2}\right).
           \end{array}$$

    \end{lem}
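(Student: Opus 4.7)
The plan is to treat both identities as formal consequences of the generalized binomial expansion $(1-\xi)^{\alpha+1}=\sum_{n=0}^{+\infty}(-1)^n\binom{\alpha+1}{n}\xi^n$, which converges on $\mathbb D$, combined with termwise differentiation of the defining series of $H_{\alpha,\beta}$. Everything reduces to verifying matching of coefficients of $\xi^n$, which in turn reduces to one elementary identity between generalized binomial coefficients.

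For the ODE, I would differentiate the series termwise to obtain
$$\xi H'_{\alpha,\beta}(\xi)=\beta\sum_{n=1}^{+\infty}\frac{n(-1)^n}{n+\beta}\binom{\alpha+1}{n}\xi^n,$$
then split $\frac{n}{n+\beta}=1-\frac{\beta}{n+\beta}$. The first piece is $\beta\sum_{n\ge 1}(-1)^n\binom{\alpha+1}{n}\xi^n=\beta\bigl((1-\xi)^{\alpha+1}-1\bigr)$ by the binomial theorem, while the second piece equals $-\beta\bigl(H_{\alpha,\beta}(\xi)-1\bigr)$ since the $n=0$ term of $H_{\alpha,\beta}$ is exactly $1$. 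The $\pm\beta$ constants cancel and the identity $\xi H'_{\alpha,\beta}(\xi)=\beta\bigl((1-\xi)^{\alpha+1}-H_{\alpha,\beta}(\xi)\bigr)$ drops out.

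For the second displayed formula I would handle the second equality first. Matching coefficients of $\xi^n$ on both sides of $H_{\alpha+1,\beta}(\xi)=\frac{1}{\alpha+\beta+2}\bigl((\alpha+2)H_{\alpha,\beta}(\xi)+\beta(1-\xi)^{\alpha+2}\bigr)$ reduces, after clearing denominators, to
$$(\alpha+2-n)\binom{\alpha+2}{n}=(\alpha+2)\binom{\alpha+1}{n},$$
which follows immediately from $\binom{\alpha+2}{n}=\frac{\Gamma(\alpha+3)}{n!\,\Gamma(\alpha+3-n)}$ and the functional equation of $\Gamma$. Once this is established, the first equality is a one-line consequence of the ODE: substituting $\xi H'_{\alpha,\beta}(\xi)=\beta(1-\xi)^{\alpha+1}-\beta H_{\alpha,\beta}(\xi)$ into $\xi(1-\xi)H'_{\alpha,\beta}(\xi)+(\alpha+\beta+2-\beta\xi)H_{\alpha,\beta}(\xi)$ produces $\beta(1-\xi)^{\alpha+2}+(\alpha+2)H_{\alpha,\beta}(\xi)$ after the coefficients of $H_{\alpha,\beta}(\xi)$ collapse, and then dividing by $\alpha+\beta+2$ identifies the two expressions for $H_{\alpha+1,\beta}(\xi)$.

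No genuine obstacle is expected: the proof is pure bookkeeping at the level of formal power series, and the absolute convergence on $\mathbb D$ guarantees that termwise differentiation and rearrangement are legitimate. The only mild point of care is keeping track of the $n=0$ term in the ODE derivation (which is what makes the stray constants cancel), and ensuring that the binomial identity $(\alpha+2-n)\binom{\alpha+2}{n}=(\alpha+2)\binom{\alpha+1}{n}$ is interpreted through $\Gamma$-functions rather than factorials, since $\alpha$ is real and in general non-integer.
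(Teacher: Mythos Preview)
Your proof is correct and very close to the paper's. The derivation of the ODE is identical (termwise differentiation and the split $\tfrac{n}{n+\beta}=1-\tfrac{\beta}{n+\beta}$). The only difference is the order in which you handle the two forms of the recursion: the paper proves the \emph{first} expression for $H_{\alpha+1,\beta}$ directly---by expanding $\xi(1-\xi)H_{\alpha,\beta}'+(\alpha+\beta+2-\beta\xi)H_{\alpha,\beta}$ as a series and matching coefficients via an identity equivalent to the Pascal rule $\binom{\alpha+1}{n}+\binom{\alpha+1}{n-1}=\binom{\alpha+2}{n}$---and then states that the second expression follows from combining this with the ODE. You do the reverse: prove the \emph{second} expression directly via the simpler identity $(\alpha+2-n)\binom{\alpha+2}{n}=(\alpha+2)\binom{\alpha+1}{n}$, and then recover the first one by substituting the ODE. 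Your ordering is slightly more economical, since the coefficient matching you need is the barest form of the Pascal relation, while the paper's direct computation carries along more terms before collapsing; but the underlying mechanism is the same in both cases.
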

    \begin{proof}
    For the first equality we have
    $$\begin{array}{lcl}
         \xi H'_{\alpha,\beta}(\xi)&=&\ds\beta\sum_{n=0}^{+\infty}\frac{n}{n+\beta}{\alpha+1\choose n}(-\xi)^n\\
         &=&\ds\beta\left(\sum_{n=0}^{+\infty}{\alpha+1\choose n}(-\xi)^n-\sum_{n=0}^{+\infty}\frac{\beta}{n+\beta}{\alpha+1\choose n}(-\xi)^n\right)\\
         &=&\ds \beta\left((1-\xi)^{\alpha+1}-H_{\alpha,\beta}(\xi)\right).
      \end{array}
    $$
    However, to prove the second equality we apply a famous equality as follows:
    $$\begin{array}{ll}
        &\ds\xi(1-\xi)H_{\alpha,\beta}'(\xi)+(\alpha+\beta+2-\beta\xi) H_{\alpha,\beta}(\xi)\\
         =&\ds \beta\sum_{n=0}^{+\infty}\frac{n}{n+\beta}{\alpha+1\choose n}(-\xi)^n+\beta\sum_{n=0}^{+\infty}\frac{n}{n+\beta}{\alpha+1\choose n}(-\xi)^{n+1}\\
         &\ds+(\alpha+\beta+2)\beta\sum_{n=0}^{+\infty}\frac{(-\xi)^n}{n+\beta}{\alpha+1\choose n}+\beta^2\sum_{n=0}^{+\infty}\frac{(-\xi)^{n+1}}{n+\beta}{\alpha+1\choose n}\\
         =&\ds \beta\left(\sum_{n=0}^{+\infty}\frac{n+\alpha+\beta+2}{n+\beta}{\alpha+1\choose n}(-\xi)^n+\sum_{n=0}^{+\infty}{\alpha+1\choose n}(-\xi)^{n+1}\right)\\
         =&\ds \beta\left(\sum_{n=0}^{+\infty}\frac{n+\alpha+\beta+2}{n+\beta}{\alpha+1\choose n}(-\xi)^n+\sum_{n=1}^{+\infty}{\alpha+1\choose n-1}(-\xi)^n\right)\\
         =&\ds (\alpha+\beta+2)+\beta\sum_{n=1}^{+\infty}\left(\frac{n+\alpha+\beta+2}{n+\beta}{\alpha+1\choose n}+{\alpha+1\choose n-1}\right)(-\xi)^n\\
         =&\ds (\alpha+\beta+2)\beta\sum_{n=0}^{+\infty}\frac{(-\xi)^n}{n+\beta}{\alpha+2\choose n}\\
         =&\ds(\alpha+\beta+2)H_{\alpha+1,\beta}(\xi)
           \end{array}$$
        The last equality can be deduced from the two previous ones.
    \end{proof}
		As a consequences of the above lemma, we have the following remarks:
    \begin{rem}\label{r1}
        For every $\alpha\in\mathbb N,\ -1<\beta \leq 0$ and $t\in[0,1[$ we have
        \begin{enumerate}
          \item $H_{\alpha,\beta}(t)\geq 1$
          \item $0\leq H_{\alpha,\beta+1}(t)\leq H_{\alpha,\beta}(t)$
        \end{enumerate}
    \end{rem}
    \begin{proof}
       The proof of the first statement is simple. For the second one, we will reason by induction on $\alpha$. We set $g_{\alpha,\beta}(t)=H_{\alpha,\beta+1}(t)- H_{\alpha,\beta}(t)$. Then, we have for $\alpha=0$
          $$\begin{array}{lcl}
               g_{0,\beta}(t)&=&\ds H_{0,\beta+1}(t)- H_{0,\beta}(t)=\left(1-\frac{\beta+1}{2+\beta}t\right)-\left(1-\frac{\beta}{1+\beta}t\right)\\
               &=&\ds\frac{-t}{(1+\beta)(2+\beta)}\leq 0.
            \end{array}
          $$
          Thus we obtain $$\frac{1}{\beta+2}\leq 1-\frac{\beta+1}{2+\beta}t=H_{0,\beta+1}(t)\leq H_{0,\beta}(t).$$
          We assume that $0\leq H_{\alpha,\beta+1}(t)\leq H_{\alpha,\beta}(t)$. Then thanks to Lemma \ref{l4.1},
          $$\begin{array}{lcl}
             H_{\alpha+1,\beta+1}(t)&=&\ds\frac{1}{\alpha+\beta+3}\left((\alpha+2)H_{\alpha,\beta+1}(t)+(\beta+1)(1-t)^{\alpha+2}\right)\\
             &\geq&\ds \frac{\alpha+2}{\alpha+\beta+3}H_{\alpha,\beta+1}(t)\geq H_{\alpha,\beta+1}(t)\geq0.
           \end{array}$$
           Moreover,
          $$\begin{array}{lcl}
             g_{\alpha+1,\beta}(t)&=&\ds H_{\alpha+1,\beta+1}(t)- H_{\alpha+1,\beta}(t)\\
             &=&\ds\frac{1}{\alpha+\beta+3}\left((\alpha+2)H_{\alpha,\beta+1}(t)+(\beta+1)(1-t)^{\alpha+2}\right)\\ &&-\ds\frac{1}{\alpha+\beta+2}\left((\alpha+2)H_{\alpha,\beta}(t)+\beta(1-t)^{\alpha+2}\right)\\
             &=&\ds \frac{(\alpha+2)g_{\alpha,\beta}(t)}{\alpha+\beta+2}-\frac{(\alpha+2)\left[H_{\alpha,\beta}(t)-(1-t)^{\alpha+2}\right]} {(\alpha+\beta+2)(\alpha+\beta+3)}\leq0.
           \end{array}$$
    \end{proof}
		We assert now the main result:
\begin{thm}
    For every $-1<\alpha<+\infty$ and $-1<\beta\leq 0$ we have
    $$\varrho^2_{\alpha,\beta}(z)=-\beta\frac{(1-t)^\alpha}{H_{\alpha,\beta}^2(t)}\left((\alpha+\beta+1)H_{\alpha,\beta}(t)-\frac{\beta(\alpha+\beta+2)}{1+\beta} H_{\alpha,\beta+1}(t)\right)+\frac{\alpha+2}{(1-t)^2}$$
    where $t=|z|^2$.
    In particular, if $\alpha\in\mathbb N$ we obtain
    $$\frac{\alpha+2}{(1-|z|^2)^2}\leq \varrho^2_{\alpha,\beta}(z)\leq \frac{\alpha+\beta+2}{(1+\beta)}\frac{1}{(1-|z|^2)^2}.$$
\end{thm}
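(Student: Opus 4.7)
The plan is to express $\kappa_{\alpha,\beta}$ as a function of $t=|z|^{2}$, compute $\partial_{z}\partial_{\bar z}\kappa_{\alpha,\beta}$ via the chain rule, and reduce the resulting combination of $H_{\alpha,\beta}$ and $H'_{\alpha,\beta}$ to the announced form using Lemma \ref{l4.1} together with an auxiliary identity linking $H_{\alpha,\beta+1}$ to $H_{\alpha,\beta}$. Since $-1<\beta\le 0$ forces $m=0$ and $\beta_{0}=\beta$, the explicit expression of $\mathcal K_{\alpha,\beta}$ in the introduction reduces to $\mathcal K_{\alpha,\beta}(\xi)=H_{\alpha,\beta}(\xi)/(1-\xi)^{\alpha+2}$, whence $\kappa_{\alpha,\beta}(z)=\log H_{\alpha,\beta}(t)-(\alpha+2)\log(1-t)$. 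Applying $\partial_{z}\partial_{\bar z}f(|z|^{2})=tf''(t)+f'(t)$ to each summand yields
\[
\varrho^{2}_{\alpha,\beta}(z)=\frac{H_{\alpha,\beta}(t)\bigl[tH''_{\alpha,\beta}(t)+H'_{\alpha,\beta}(t)\bigr]-t\bigl(H'_{\alpha,\beta}(t)\bigr)^{2}}{H^{2}_{\alpha,\beta}(t)}+\frac{\alpha+2}{(1-t)^{2}}.
\]

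Differentiating the first identity of Lemma \ref{l4.1}, $tH'_{\alpha,\beta}(t)=\beta[(1-t)^{\alpha+1}-H_{\alpha,\beta}(t)]$, produces $tH''_{\alpha,\beta}(t)+H'_{\alpha,\beta}(t)=-\beta(\alpha+1)(1-t)^{\alpha}-\beta H'_{\alpha,\beta}(t)$, and rewriting $t(H'_{\alpha,\beta})^{2}=H'_{\alpha,\beta}\cdot tH'_{\alpha,\beta}$ via the same identity triggers a cancellation of the two $\beta H_{\alpha,\beta}H'_{\alpha,\beta}$ terms. The numerator above therefore collapses to $-\beta(1-t)^{\alpha}[(\alpha+1)H_{\alpha,\beta}(t)+(1-t)H'_{\alpha,\beta}(t)]$. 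To match this with the announced expression, I need the further identity
\[
(1-t)H'_{\alpha,\beta}(t)=\beta H_{\alpha,\beta}(t)-\frac{\beta(\alpha+\beta+2)}{1+\beta}H_{\alpha,\beta+1}(t),
\]
equivalently $(\beta+1)\bigl[H_{\alpha,\beta}(t)-(1-t)^{\alpha+2}\bigr]=(\alpha+\beta+2)\,tH_{\alpha,\beta+1}(t)$. This is the main step; since Lemma \ref{l4.1} only shifts the $\alpha$-index, it cannot be invoked directly. I shall verify it by matching coefficients of $(-t)^{n}$ on both sides: after using $\binom{\alpha+2}{n}=\binom{\alpha+1}{n}+\binom{\alpha+1}{n-1}$, the identity reduces to the elementary relation $n\binom{\alpha+1}{n}=(\alpha+2-n)\binom{\alpha+1}{n-1}$. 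Substituting back gives the stated formula for $\varrho^{2}_{\alpha,\beta}$.

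For the two-sided estimate when $\alpha\in\mathbb N$, Remark \ref{r1} provides $H_{\alpha,\beta}(t)\ge 1$ and $0\le H_{\alpha,\beta+1}(t)\le H_{\alpha,\beta}(t)$. The lower bound is immediate: $-\beta\ge 0$, $-\beta(\alpha+\beta+2)/(1+\beta)\ge 0$ and $\alpha+\beta+1>0$, so the bracket $(\alpha+\beta+1)H_{\alpha,\beta}(t)-\frac{\beta(\alpha+\beta+2)}{1+\beta}H_{\alpha,\beta+1}(t)$ is non-negative, which delivers $\varrho^{2}_{\alpha,\beta}(z)\ge(\alpha+2)/(1-t)^{2}$. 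For the upper bound I use $H_{\alpha,\beta+1}(t)\le H_{\alpha,\beta}(t)$ inside that same bracket; by the elementary identity $(\alpha+\beta+1)(1+\beta)-\beta(\alpha+\beta+2)=\alpha+1$, the bracket is then bounded above by $(\alpha+1)H_{\alpha,\beta}(t)/(1+\beta)$. One factor of $H_{\alpha,\beta}(t)$ cancels against $H^{2}_{\alpha,\beta}$, and the trivial inequality $(1-t)^{\alpha+2}\le 1\le H_{\alpha,\beta}(t)$ converts $(1-t)^{\alpha}/H_{\alpha,\beta}(t)$ into $1/(1-t)^{2}$. The first summand of $\varrho^{2}_{\alpha,\beta}$ is therefore majorized by $-\beta(\alpha+1)/[(1+\beta)(1-t)^{2}]$, and adding the $(\alpha+2)/(1-t)^{2}$ piece reproduces exactly $(\alpha+\beta+2)/[(1+\beta)(1-t)^{2}]$.

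The main obstacle is the auxiliary identity relating $(1-t)H'_{\alpha,\beta}$ to $H_{\alpha,\beta+1}$, because it alters the $\beta$-parameter of $H$ while Lemma \ref{l4.1} only alters the $\alpha$-parameter; once that identity is verified coefficient-by-coefficient, the remainder of the argument is chain-rule bookkeeping and a careful sign analysis relying on Remark \ref{r1}.
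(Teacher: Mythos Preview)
Your proof is correct and follows the same route as the paper: split $\kappa_{\alpha,\beta}$ as $\log H_{\alpha,\beta}(t)-(\alpha+2)\log(1-t)$, apply the chain-rule formula $\mathscr G_\varphi(t)=\varphi'/\varphi+t\varphi''/\varphi-t(\varphi'/\varphi)^2$, simplify via Lemma \ref{l4.1}, and then invoke Remark \ref{r1} for the two-sided estimate. The only substantive addition is your explicit coefficient-by-coefficient verification of the $\beta$-shift identity $(1-t)H'_{\alpha,\beta}=\beta H_{\alpha,\beta}-\tfrac{\beta(\alpha+\beta+2)}{1+\beta}H_{\alpha,\beta+1}$, which the paper hides under ``it is not hard to see'' --- a welcome clarification, since Lemma \ref{l4.1} only shifts the $\alpha$-index.
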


\begin{proof}
We claim that if $\varphi:[0,1[\longrightarrow ]0,+\infty[$ is a $\mathcal C^2-$function on $[0,1[$ and $\phi(z):=\log(\varphi(|z|^2))$ then for every $z\in\mathbb D$ we have
    \begin{equation}\label{a1}
    \frac{\partial^2\phi}{\partial z\partial \overline{z}}(z)=:\mathscr G_\varphi(|z|^2)=\left(\frac{\varphi'(|z|^2)}{\varphi(|z|^2)}+|z|^2\frac{\varphi''(|z|^2)}{\varphi(|z|^2)}-|z|^2\left(\frac{\varphi'(|z|^2)}{\varphi(|z|^2)} \right)^2\right).
    \end{equation}
    The function $\mathbb K_{\alpha,\beta}(z,z)>0$ for every $z\in\mathbb D$. Hence the function
    $$\kappa_{\alpha,\beta}(z)=\log(\mathbb K_{\alpha,\beta}(z,z))=\log(Q_{\alpha,\beta}(|z|^2))-(\alpha+2)\log(1-|z|^2)$$ is well defined and $\mathcal C^\infty$ on $\mathbb D$.
    It follows that for every $z\in\mathbb D$ we have
    $$\frac{\partial^2 \kappa_{\alpha,\beta}}{\partial z\partial\overline{z}}(z)=\frac{\partial^2 \log(Q_{\alpha,\beta}(|z|^2))}{\partial z\partial\overline{z}}+\frac{(\alpha+2)}{(1-|z|^2)^2}=\mathscr G_{Q_{\alpha,\beta}}(|z|^2)+\frac{(\alpha+2)}{(1-|z|^2)^2}.$$
    The term $\frac{(\alpha+2)}{(1-|z|^2)^2}$ is exactly the one corresponding to the classical Bergman-Poincar\'e metric. For the other term, using the formula \eqref{a1} and lemma \ref{l4.1}, it is not hard to see that
    $$\mathscr G_{Q_{\alpha,\beta}}(t)=-\beta\frac{(1-t)^\alpha}{H^2_{\alpha,\beta}(t)}\left((\alpha+\beta+1)H_{\alpha,\beta}(t)-\frac{\beta(\alpha+\beta+2)}{1+\beta} H_{\alpha,\beta+1}(t)\right)$$
    Thanks to Remark \ref{r1},
    $$\ds(\alpha+\beta+1)H_{\alpha,\beta}(t)-\frac{\beta(\alpha+\beta+2)}{1+\beta} H_{\alpha,\beta+1}(t)\geq \ds \frac{\alpha+1}{1+\beta}H_{\alpha,\beta+1}(t)\geq0.$$
    and
    $$\ds(\alpha+\beta+1)H_{\alpha,\beta}(t)-\frac{\beta(\alpha+\beta+2)}{1+\beta} H_{\alpha,\beta+1}(t)\leq \ds \frac{\alpha+1}{1+\beta}H_{\alpha,\beta}(t).$$
    Thus we obtain
    $$\mathscr G_{Q_{\alpha,\beta}}(t) \leq -\beta(\alpha+1)\frac{(1-t)^\alpha}{(1+\beta)H_{\alpha,\beta}(t)} \leq -\frac{\beta(\alpha+1)}{1+\beta}.$$
    It follows that
    $$\frac{\alpha+2}{(1-t)^2}\leq \varrho^2_{\alpha,\beta}(z)\leq -\frac{\beta(\alpha+1)}{1+\beta}+\frac{\alpha+2}{(1-t)^2}\leq \frac{\alpha+\beta+2}{(1+\beta)}\frac{1}{(1-t)^2}.$$
\end{proof}
As a simple consequence, for every $z\in\mathbb D$, we have $$\lim_{\beta\to0^-}\varrho_{\alpha,\beta}(z)=\frac{\sqrt{\alpha+2}}{1-|z|^2}$$
      and for every $-1<\beta\leq 0$ we have $$\varrho_{\alpha,\beta}(0)=\sqrt{\frac{\alpha+2+\beta}{1+\beta}}.$$
Moreover, using the previous proof we conclude the following Cauchy-Schwarz inequality:
\begin{cor}
    For every $\alpha\in\mathbb N$ and $\beta\in]-1,0]$ there exists $x_0\geq 1$ such that for every $x\in]-\infty,x_0[$ we have
    $$\left(\sum_{j=0}^{\alpha+1}  j\frac{(-x)^j}{j+\beta}{\alpha+1\choose{j}}\right)^2\leq \left(\sum_{j=0}^{\alpha+1}  j^2\frac{(-x)^j}{j+\beta}{\alpha+1\choose{j}}\right)\left(\sum_{j=0}^{\alpha+1} \frac{(-x)^j}{j+\beta}{\alpha+1\choose{j}} \right).
    $$

\end{cor}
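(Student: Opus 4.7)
The idea is to recognize the inequality as a direct reformulation of the non-negativity $\mathscr G_{Q_{\alpha,\beta}}(t)\geq 0$ established inside the proof of the preceding theorem, and then to extend the range of validity by continuity, using the polynomial character of $H_{\alpha,\beta}$ when $\alpha\in\mathbb N$.

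First I would set $\varphi(x):=H_{\alpha,\beta}(x)$ (which equals $Q_{\alpha,\beta}(x)$ since $m=0$ here). Termwise differentiation of the defining series gives
$$\frac{x\varphi'(x)}{\beta}=\sum_{j=0}^{\alpha+1}\frac{j(-x)^j}{j+\beta}\binom{\alpha+1}{j},\qquad \frac{x\varphi'(x)+x^2\varphi''(x)}{\beta}=\sum_{j=0}^{\alpha+1}\frac{j^2(-x)^j}{j+\beta}\binom{\alpha+1}{j},$$
while $\varphi(x)/\beta$ matches the third sum. After cancelling the common factor $\beta^2$, the desired inequality reduces to
$$\bigl(x\varphi'(x)\bigr)^2\leq \bigl(x\varphi'(x)+x^2\varphi''(x)\bigr)\varphi(x).\qquad(*)$$

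Next, multiplying identity~\eqref{a1} by $x\,\varphi(x)^2$ yields
$$\bigl(x\varphi'(x)+x^2\varphi''(x)\bigr)\varphi(x)-\bigl(x\varphi'(x)\bigr)^2=x\,\varphi(x)^2\,\mathscr G_\varphi(x),$$
so $(*)$ is equivalent to $x\,\mathscr G_\varphi(x)\geq 0$. The theorem's proof delivers $\mathscr G_\varphi(t)\geq 0$ on $[0,1)$, and Remark~\ref{r1} gives $\varphi(t)\geq 1>0$ there; combined with $x\geq 0$, this establishes $(*)$ throughout $[0,1)$, hence by continuity on $[0,1]$.

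Finally, for the extension beyond $1$, I would exploit that $\alpha\in\mathbb N$ makes $P(x):=\bigl(x\varphi'(x)+x^2\varphi''(x)\bigr)\varphi(x)-\bigl(x\varphi'(x)\bigr)^2$ a polynomial with only finitely many zeros; since $P\geq 0$ on the compact set $[0,1]$, the closed region $\{P\geq 0\}$ contains a full neighborhood of $[0,1]$, and taking $x_0$ to be the infimum of $\{x>1:P(x)<0\}$ (or $+\infty$ if this set is empty) supplies an admissible $x_0\geq 1$. The main obstacle, as I see it, is covering the stated interval also for negative $x$: for $x<0$ the weights $(-x)^j/(j+\beta)\binom{\alpha+1}{j}$ are non-negative for $j\geq 1$, which yields a partial Cauchy--Schwarz bound on the $j\geq 1$ part of the sums, but the sign of the missing $j=0$ weight $1/\beta<0$ requires a delicate sign analysis of the factorization of the polynomial $P$ in order to pin down the correct connected component of $\{P\geq 0\}$ extending down past $0$.
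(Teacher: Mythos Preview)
On $[0,1)$ and for the extension past $1$, your argument is exactly the paper's: rewrite the inequality as $x\,\varphi(x)^2\,\mathscr G_{Q_{\alpha,\beta}}(x)\ge 0$, invoke $\mathscr G_{Q_{\alpha,\beta}}\ge 0$ on $[0,1)$ from the theorem, and then push $x_0$ upward by continuity (the paper simply takes $x_0$ to be the supremum of the $x$ for which the inequality still holds; your polynomial argument gives the same conclusion).

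Where you hesitate, you are right and the paper is not. For $x\le 0$ the paper asserts that all the weights $\dfrac{(-x)^j}{j+\beta}\binom{\alpha+1}{j}$ are non-negative and concludes by the classical Cauchy--Schwarz inequality. But the $j=0$ weight equals $1/\beta<0$ for every $\beta\in(-1,0)$; it survives only in the rightmost sum (the factors $j$ and $j^2$ annihilate it elsewhere), so one is comparing the genuine Cauchy--Schwarz right-hand side against that same quantity \emph{plus} a negative multiple. In fact the inequality fails on the whole negative half-line: for $\alpha=0$ and any $\beta\in(-1,0)$ the stated inequality reduces to $0\le \dfrac{-x}{\beta(1+\beta)}$, which is equivalent to $x\ge 0$. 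Thus the obstacle you flag cannot be removed by any sign analysis of $P$; the corollary as stated on $(-\infty,x_0)$ is simply false for $x<0$, and your proof (which coincides with the paper's on $[0,x_0)$) is already the correct one on the range where the statement actually holds.
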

\begin{proof}
    If $x\leq 0$ then the kernel $\frac{(-x)^j}{j-r}{\alpha+1\choose{j}}\geq0$
     for every $0\leq j\leq \alpha+1$ and the inequality is obvious. For $0<x<1$, thanks Inequality $\mathscr G_{Q_{\alpha,\beta}}(x)\geq0$ we obtain the desired Cauchy-Schwarz Inequality. To conclude the proof one can consider $x_0$ the supremum of all $x$ such that the Cauchy-Schwarz Inequality holds for $x$.
\end{proof}

For every piecewise continuously differentiable curve $\gamma:[0,1]\longrightarrow \mathbb D$, we define the length of $\gamma$ by
$$\ell_{\alpha,\beta}(\gamma):=\int_0^1\varrho_{\alpha,\beta}(\gamma(s))|\gamma'(s)|ds.
$$
One can define a distance on $\mathbb D$ by $\mathbf{d}_{\alpha,\beta}(p,q):=\inf\ell_{\alpha,\beta}(\gamma)$ where the infimum is taken over all piecewise continuously differentiable curves $\gamma$ on $\mathbb D$ with start point $p$ and end point $q$.
\begin{cor}
    For $-1<\alpha<+\infty$ and $-1<\beta\leq 0$, the space $(\mathbb D,\mathbf{d}_{\alpha,\beta})$ is a complete metric space.
\end{cor}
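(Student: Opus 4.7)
The plan is to exploit the comparison between $\mathbf{d}_{\alpha,\beta}$ and the classical Bergman-Poincar\'e metric on $\mathbb{D}$ afforded by the two-sided estimates established in the preceding theorem. Since the Poincar\'e metric is well known to make $\mathbb{D}$ a complete metric space, equivalence of the infinitesimal metrics up to multiplicative constants transfers completeness to $\mathbf{d}_{\alpha,\beta}$.

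Concretely, I would take a Cauchy sequence $(z_n)_{n\geq 0}$ in $(\mathbb{D}, \mathbf{d}_{\alpha,\beta})$ and first show that it is contained in a Euclidean-compact subset of $\mathbb{D}$. The decisive ingredient is the lower bound $\varrho_{\alpha,\beta}(z)\geq \sqrt{\alpha+2}/(1-|z|^2)$, which follows from the formula in the preceding theorem because $\mathscr G_{Q_{\alpha,\beta}}(|z|^2)\geq 0$ and $(\alpha+2)/(1-|z|^2)^2$ is the pure Poincar\'e contribution. Integrating along any piecewise-$\mathcal{C}^1$ curve joining a fixed basepoint $p_0\in\mathbb{D}$ to $z_n$ yields $\mathbf{d}_{\alpha,\beta}(p_0,z_n)\geq \sqrt{\alpha+2}\,\rho(p_0,z_n)$, where $\rho$ denotes the standard Poincar\'e distance on $\mathbb{D}$. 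Since Cauchy sequences are $\mathbf{d}_{\alpha,\beta}$-bounded and $\rho(p_0,z)\to+\infty$ as $|z|\to 1^-$, the sequence $(z_n)$ must remain in some disk $\{|z|\leq 1-\delta\}$ with $\delta>0$.

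By Euclidean compactness, a subsequence $z_{n_k}$ converges to some $z^\star\in\mathbb{D}$. To upgrade this to convergence in $\mathbf{d}_{\alpha,\beta}$, I would invoke the fact that $\varrho_{\alpha,\beta}$ is of class $\mathcal{C}^\infty$ on $\mathbb{D}$ and therefore bounded on the compact set $\{|z|\leq 1-\delta\}$ by some constant $M>0$. Integrating $\varrho_{\alpha,\beta}$ along the Euclidean straight segment from $z_{n_k}$ to $z^\star$ gives $\mathbf{d}_{\alpha,\beta}(z_{n_k},z^\star)\leq M\,|z_{n_k}-z^\star|\to 0$. The Cauchy property then forces the full sequence $(z_n)$ to converge to $z^\star$ in $\mathbf{d}_{\alpha,\beta}$, establishing completeness.

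The main subtlety is that the explicit upper bound stated in the preceding theorem was spelt out only for $\alpha\in\mathbb{N}$, whereas the present corollary ranges over $-1<\alpha<+\infty$. This causes no issue: completeness requires only a \emph{global} lower bound on $\varrho_{\alpha,\beta}$ of Poincar\'e type, which holds in full generality from the non-negativity of $\mathscr G_{Q_{\alpha,\beta}}$, together with a \emph{local} upper bound on compact subsets of $\mathbb{D}$, which comes for free from the smoothness of $\kappa_{\alpha,\beta}$. No finer information on the curvature term is needed to conclude.
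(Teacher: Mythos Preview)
Your approach is essentially the paper's: the paper likewise deduces completeness in one line from the equivalence of $\varrho_{\alpha,\beta}$ with the Poincar\'e density, citing the two-sided bound
\[
\frac{\sqrt{\alpha+2}}{1-|z|^2}\le \varrho_{\alpha,\beta}(z)\le \sqrt{\frac{\alpha+\beta+2}{1+\beta}}\,\frac{1}{1-|z|^2}
\]
and the completeness of the Poincar\'e disc. You simply spell out the standard mechanism (Cauchy $\Rightarrow$ $\mathbf d_{\alpha,\beta}$-bounded $\Rightarrow$ compactly contained via the lower bound; then convergence via local boundedness of $\varrho_{\alpha,\beta}$).

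You are more careful than the paper on one point: the two-sided estimate in the preceding theorem was stated only for $\alpha\in\mathbb N$, and you correctly observe that the global upper bound is dispensable---smoothness of $\varrho_{\alpha,\beta}$ gives a local bound on compacta, which is all that is needed. One caveat, however: your assertion that $\mathscr G_{Q_{\alpha,\beta}}\ge 0$ ``holds in full generality'' is itself only proved in the paper for $\alpha\in\mathbb N$ (the argument goes through Remark~1, which assumes $\alpha\in\mathbb N$). So the Poincar\'e-type lower bound you rely on carries exactly the same unproved extension to non-integer $\alpha$ as the paper's own proof; you have not closed that gap, only shifted where it sits.
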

\begin{proof}
    This is a simple consequence of the fact that this metric is equivalent with the Poincar\'e metric on the disc $\mathbb D$. Indeed for every $z\in\mathbb D$ one has
    \begin{equation}\label{Pe}
    \frac{\sqrt{\alpha+2}}{1-|z|^2}\leq \varrho_{\alpha,\beta}(z)\leq \sqrt{\frac{\alpha+2+\beta}{1+\beta}}\frac1{1-|z|^2}
    \end{equation}

    and the two constants in these inequalities are sharp.
\end{proof}
\subsection{Toeplitz and Berezin operators via Carleson measures}
Toeplitz operators can also be defined for finite measures. Given a finite complex Borel measure $\nu$ on $\mathbb{D}$, we introduce $T^{\alpha,\beta}_\nu=T_\nu$ on $\mathcal{A}_{\alpha,\beta}^p$, for $-1<\alpha,\beta<+\infty$ as follows:
$$T_\nu f(z)=\int_{\mathbb{D}} f(w)\ \overline{\mathbb{K}_{\alpha,\beta}(w,z)}\ d\nu(w),\ \quad f\in  \mathcal{A}_{\alpha,\beta}^p.$$
Now, we characterize the boundedness and compactness of $T_\nu$ by the Carleson measure. Let $\nu$ be a finite positive Borel measure on $\mathbb{D}$ and $p\geq1$. We say that $\nu$ is a Carleson measure on the $\beta$-modified Bergman space $\mathcal{A}_{\alpha,\beta}^p$ if there exists a finite constant $C>0$ such that
$$\int_{\mathbb{D}} |f(z)|^p d\nu(z)\leq C \int_{\mathbb{D}} |f(z)|^p d\mu_{\alpha,\beta}(z),$$
for all $f\in \mathcal{A}_{\alpha,\beta}^p$.
We say that $\nu$ is vanishing Carleson on $\mathcal{A}_{\alpha,\beta}^p$ with $p>1$ if the inclusion mapping
$$i_p: \mathcal{A}_{\alpha,\beta}^p\rightarrow \textbf{L}^p(\mathbb{D},d\nu)$$
is compact and we say that it is vanishing Carleson on $\mathcal{A}_{\alpha,\beta}^1$ if
$$i_1: \mathcal{A}_{\alpha,\beta}^1\rightarrow \textbf{L}^1(\mathbb{D},d\nu)$$
is $*-$ compact means that $\left\|i_1(f_n)\right\|_{\textbf{L}^1(d\nu)}$ converges to $0$ for every sequence $f_n$ which converges to $0$ in the weak-star topology of $\mathcal{A}_{\alpha,\beta}^1$.\\
Now, we let $-1<\alpha<+\infty$ and $-1<\beta \leq 0$ and we define for  any $r>0$ and $z\in \mathbb{D}$ the modified Bergman disc as follow
$$\mathbf{D}_{\alpha,\beta}(z,r)=\{w\in \mathbb{D}^*; \mathbf{d}_{\alpha,\beta}(w,z)<r\},$$
where $\mathbf{d}_{\alpha,\beta}$ be the modified Bergman-Poincar\'{e} metric defined in the previous section. In the sequel of this part, we take $-1<\alpha<+\infty$ and $-1<\beta\leq 0$.

\begin{lem}\label{measure}
For any $r>0$ and $w\in \mathbb{D}^*$,  the following statements hold:

\begin{description}
	\item[$(1)$] Set $\tau_1= \tanh(\frac{r }{\sqrt{\alpha+2}})$ and $\tau_2= \tanh(\frac{r \sqrt{1+\beta}}{\sqrt{\alpha+2+\beta}})$, then
	$$ \ds\frac{(1-|z|^2)^2 \tau_2^2}{(1-|z|^2 \tau_2^2)^2}\leq |\mathbf{D}_{\alpha,\beta}(z,r)|\leq \frac{(1-|z|^2)^2 \tau_1^2} {(1-|z|^2 \tau_1^2)^2}.$$	
  \item [$(2)$] For every $\beta\in\mathcal J_\alpha $, we have
	$$\inf\left\{\left|k_{\alpha,\beta,2}^z(w)\right|^2;\ w\in \mathbf{D}_{\alpha,\beta}(z,r)\right\}\gtrsim\left[\mu_{\alpha,\beta}\big(\mathbf{D}_{\alpha,\beta}(z,r)\big)\right]^{-1}.$$
\end{description}

\end{lem}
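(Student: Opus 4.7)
\medskip

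\noindent\textbf{Proof plan.} For part $(1)$, the plan is to sandwich the modified Bergman disc between two pseudo-hyperbolic discs via the equivalence of $\mathbf{d}_{\alpha,\beta}$ with the Poincar\'e metric, then invoke the classical area formula for a pseudo-hyperbolic disc. Integrating the density bounds $\sqrt{\alpha+2}/(1-|z|^2)\le \varrho_{\alpha,\beta}(z)\le \sqrt{(\alpha+\beta+2)/(1+\beta)}/(1-|z|^2)$ established in the previous subsection along arbitrary curves yields
\[
\sqrt{\alpha+2}\, d_P(z,w)\;\le\; \mathbf{d}_{\alpha,\beta}(z,w)\;\le\; \sqrt{\tfrac{\alpha+\beta+2}{1+\beta}}\, d_P(z,w),
\]
where $d_P(z,w)=\tanh^{-1}|\psi_z(w)|$ denotes the hyperbolic distance. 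Reading off these inequalities yields $E(z,\tau_2)\subseteq\mathbf{D}_{\alpha,\beta}(z,r)\subseteq E(z,\tau_1)$ with $E(z,\tau):=\{w:|\psi_z(w)|<\tau\}$. A direct calculation (via the M\"obius change of variable $w=\psi_z(v)$) identifies $E(z,\tau)$ with a Euclidean disc of normalized area $\tau^2(1-|z|^2)^2/(1-\tau^2|z|^2)^2$, from which the announced double inequality for $|\mathbf{D}_{\alpha,\beta}(z,r)|$ follows.

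For part $(2)$, the plan is to show that both sides behave like $(1-|z|^2)^{-(\alpha+2)}$ as $|z|\to 1^-$, uniformly in $w\in\mathbf{D}_{\alpha,\beta}(z,r)$. Two ingredients drive the argument. First, for $w\in E(z,\tau_1)$ the identity $1-|\psi_z(w)|^2=(1-|z|^2)(1-|w|^2)/|1-\overline{z}w|^2$ delivers the standard equivalences $1-|w|^2\approx 1-|z|^2\approx |1-\overline{z}w|$, with constants depending only on $\tau_1$. Second, since $\beta_0\in\mathcal{J}_\alpha$, $|Q_{\alpha,\beta}|$ is bounded above and below by positive constants on $\overline{\mathbb{D}}$, and for $|z|$ close to $1$ the factor $|z\overline{w}|^{-m}$ is harmless by the first point. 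Combining these with the explicit form of $\mathbb{K}_{\alpha,\beta}$ gives $|\mathbb{K}_{\alpha,\beta}(w,z)|^2\approx (1-|z|^2)^{-2(\alpha+2)}$ and $\mathbb{K}_{\alpha,\beta}(z,z)\approx (1-|z|^2)^{-(\alpha+2)}$, hence
\[
|k_{\alpha,\beta,2}^z(w)|^2=\frac{|\mathbb{K}_{\alpha,\beta}(w,z)|^2}{\mathbb{K}_{\alpha,\beta}(z,z)}\;\approx\; \frac{1}{(1-|z|^2)^{\alpha+2}}.
\]
On the other hand, part $(1)$ together with the same equivalences yields $\mu_{\alpha,\beta}(\mathbf{D}_{\alpha,\beta}(z,r))\approx (1-|z|^2)^{\alpha}\,|\mathbf{D}_{\alpha,\beta}(z,r)|\approx (1-|z|^2)^{\alpha+2}$. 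Taking the infimum over $w$ closes the argument.

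The main obstacle is making the implicit constants uniform in $w\in\mathbf{D}_{\alpha,\beta}(z,r)$ while keeping track of the $|z|^{-m}$ factor in the kernel and the non-invariant weight $|w|^{2\beta}$ in $d\mu_{\alpha,\beta}$: in the classical Bergman setting these would cancel automatically by M\"obius invariance, but here both must be estimated separately via the pseudo-hyperbolic comparisons above. The hypothesis $\beta_0\in\mathcal{J}_\alpha$ is crucial, since it ensures that $Q_{\alpha,\beta}$ is zero-free and bounded away from zero on $\overline{\mathbb{D}}$; without it the numerator of the kernel could vanish within the Bergman disc and destroy the pointwise lower bound.
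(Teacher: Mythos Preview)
Your proposal is correct and follows essentially the same route as the paper. For part~(1) the paper simply cites the density inequality $\sqrt{\alpha+2}/(1-|z|^2)\le\varrho_{\alpha,\beta}(z)\le\sqrt{(\alpha+\beta+2)/(1+\beta)}/(1-|z|^2)$ together with Lemma~4.3.3 in Zhu, which is exactly the sandwich $E(z,\tau_2)\subseteq\mathbf{D}_{\alpha,\beta}(z,r)\subseteq E(z,\tau_1)$ plus the Euclidean area formula that you spell out. For part~(2) the paper likewise reduces to $|k_{\alpha,0,2}^z(w)|^2$ via the non-vanishing of $Q_{\alpha,\beta}$ and then invokes the standard hyperbolic-disc equivalences, matching your argument.

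Two small remarks. First, in the setting of this lemma the paper has already restricted to $-1<\beta\le 0$, so $m=0$ and the factor $|z\overline{w}|^{-m}$ you discuss is absent; your argument simplifies accordingly. Second, for the measure estimate the paper obtains only the lower bound $\mu_{\alpha,\beta}(\mathbf{D}_{\alpha,\beta}(z,r))\gtrsim(1-|z|^2)^{\alpha+2}$ by using $|w|^{2\beta}\ge 1$ (valid since $\beta\le 0$) to pass to $\mu_{\alpha,0}$ and then citing Zhu's $\mu_{\alpha,0}(D)\approx|D|^{1+\alpha/2}$, whereas you argue the two-sided equivalence directly from $(1-|w|^2)\approx(1-|z|^2)$ and $|w|\approx 1$ on the disc as $|z|\to 1^-$. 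Either route suffices, since only the lower bound on $\mu_{\alpha,\beta}$ is needed to close the inequality $\inf|k^z_{\alpha,\beta,2}|^2\cdot\mu_{\alpha,\beta}(\mathbf{D}_{\alpha,\beta}(z,r))\gtrsim 1$.
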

\begin{proof}
The first statement can be deduced from Inequality \eqref{Pe} and Lemma $4.3.3$ in \cite{r9}. Now, we show the second statement in two steps. To start the first step, we claim that Inequality \eqref{Pe} gives
\begin{equation}\label{poincare0}
\mu_{\alpha,\beta}\big(\mathbf{D}(z,c_2r)\big)\leq \mu_{\alpha,\beta}\big(\mathbf{D}_{\alpha,\beta}(z,r)\big) \leq\mu_{\alpha,\beta}\big(\mathbf{D}(z,c_1r)\big),
\end{equation}
where $$c_1=\frac{1}{\sqrt{\alpha+2}},\ c_2=\sqrt{\frac{1+\beta}{\alpha+2+\beta}}$$ and $\mathbf{D}(z,s)$  is the classical (Poincar\'e) hyperbolic disc, that is $\mathbf{D}(z,s)=\mathbf{D}_{0,0}(z,s)$ corresponding to $\alpha=\beta=0$ in our statement.\\

Moreover, we have $\mathbf{D}(z,c_jr)=\mathbb{D}(C_j,R_j)$,  where
$$C_j=\frac{1-\tau^2_j}{1-\tau^2_j|z|^2}z,\qquad R_j=\frac{1-|z|^2}{1-\tau^2_j|z|^2}\tau^2_j,\quad j=1,2.$$
It follows that
\begin{align*}
\mu_{\alpha,\beta}\big(\mathbf{D}(z,c_2r)\big)&=\frac{1}{\mathscr{B}(\alpha+1,\beta+1)}\int_{\mathbb{D}(C_2,R_2) } |w|^{2\beta} (1-|w|^2)^\alpha dA(w)\\
&\gtrsim\int_{\mathbb{D}(C_2,R_2) }  (1-|w|^2)^\alpha dA(z)\\
& \gtrsim\mu_{\alpha,0}\big(\mathbb{D}(C_2,R_2)\big).
\end{align*}

Thanks to \cite[p. 121]{r9}, $\mu_{\alpha,0}(\mathbb{D}(C_2,R_2))$ is comparable to
$$|\mathbb{D}(C_2,R_2)|^{1+\frac{\alpha}{2}}=\ds\frac{(1-|z|^2)^{\alpha+2} \tau_2^{\alpha+2}}{(1-|z|^2 \tau_2^2)^{\alpha+2}}.$$ Thus we conclude that  $$\mu_{\alpha,\beta}(\mathbf{D}_{\alpha,\beta}(z,r))\gtrsim\frac{(1-|z|^2)^{\alpha+2} \tau_2^{\alpha+2}}{(1-|z|^2 \tau_2^2)^{\alpha+2}}.$$  This achieves the first step.

We claim that for every $\beta\in\mathcal J_\alpha$, the function $|Q_{\alpha,\beta}|$ has no zero in $\mathbf{D}_{\alpha,\beta}(z,r)$  which implies in particular that the infimum of this function will be finite and different to $0$. It follows that
\begin{align*}
    \left|k_{\alpha,\beta,2}^z(w)\right|^2&=\frac{|Q_{\alpha,\beta}(z\overline{w})|^2}{Q_{\alpha,\beta}(|z|^2)}\frac{(1-|z|^2)^{\alpha+2}}{|1-z\overline{w}| ^{2\alpha+4}}\\
    &\gtrsim \frac{(1-|z|^2)^{\alpha+2}}{|1-z\overline{w}| ^{2\alpha+4}}=\left|k_{\alpha,0,2}^z(w)\right|^2
\end{align*}
Again, applying Lemma 4.3.3 in \cite{r9}, we obtain
\begin{align*}
    \inf_{w\in\mathbf{D}_{\alpha,\beta}(z,r)}\left|k_{\alpha,\beta,2}^z(w)\right|^2&\geq \inf_{w\in\mathbf{D}(z,c_2r)}\left|k_{\alpha,\beta,2}^z(w)\right|^2 \\
    &\gtrsim \inf_{w\in\mathbf{D}(z,c_2r)}\left|k_{\alpha,0,2}^z(w)\right|^2\\
    &\gtrsim \frac{(1-|z| \tau_2)^{2\alpha+4}}{(1-|z|^2)^{\alpha+2}}\\
    &\gtrsim \frac{1}{\mu_{\alpha,\beta}(\mathbf{D}_{\alpha,\beta}(z,r))}
\end{align*}
and this finishes the proof.
\end{proof}
In the sequel, we assume that $\beta\in \mathcal{J}_{\alpha}$ and we give below a result concerning  Carleson measure that will be useful for the characterization of Toeplitz operators.
\begin{thm}\label{Carleson}
Let $\nu$ be a finite positive measure on $\mathbb{D}$, $p\geq1$ and $r>0$. If $\nu$ is a Carleson measure on $\mathcal{A}_{\alpha,\beta}^p$ then
$$\sup \bigg\{\frac{\nu(\mathbf{D}_{\alpha,\beta}(z,r))}{\mu_{\alpha,\beta}\big(\mathbf{D}_{\alpha,\beta}(z,r)\big)};\ z\in \mathbb{D}\bigg\}<+\infty.$$
\end{thm}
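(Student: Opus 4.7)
My plan is to use the standard test-function approach against normalized reproducing kernels. Throughout this subsection $-1<\beta\le 0$ and $\beta\in\mathcal{J}_\alpha$, so in fact $m=0$ and the punctured-disc factor $|z|^{-m}$ plays no role. For each $z\in\mathbb{D}$, feeding the assumed Carleson inequality with $f=k^z_{\alpha,\beta,p}$ (which has unit norm in $\mathcal{A}_{\alpha,\beta}^p$) immediately gives
$$\int_{\mathbb{D}}|k^z_{\alpha,\beta,p}(w)|^p\,d\nu(w)\ \le\ C,$$
and restricting the integral to $\mathbf{D}_{\alpha,\beta}(z,r)$ reduces the theorem to the pointwise lower bound
$$\inf_{w\in\mathbf{D}_{\alpha,\beta}(z,r)}|k^z_{\alpha,\beta,p}(w)|^p\ \gtrsim\ \bigl[\mu_{\alpha,\beta}(\mathbf{D}_{\alpha,\beta}(z,r))\bigr]^{-1},$$
uniformly in $z$; this is exactly the $p$-analogue of Lemma~\ref{measure}(2).

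To establish this bound I would combine three ingredients already present in the excerpt: (i) the asymptotic $\|K_{\alpha,\beta}(\cdot,z)\|_{\alpha,\beta,p}\approx (1-|z|^2)^{-(2+\alpha)/q}$ computed just before Proposition~\ref{prop4.2}; (ii) on $\mathbf{D}_{\alpha,\beta}(z,r)$ the comparisons $|w|\approx|z|$ and $|1-w\overline z|\approx 1-|z|^2$, which follow from the equivalence of $\mathbf{d}_{\alpha,\beta}$ with the Poincaré metric (inequality~\eqref{Pe}) together with the standard hyperbolic identities already invoked in the proof of Lemma~\ref{measure}; and (iii) the hypothesis $\beta\in\mathcal{J}_\alpha$, which forces $|Q_{\alpha,\beta}(w\overline z)|$ to be bounded above and below by positive constants on $\mathbb{D}$. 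Substituting these into $|k^z_{\alpha,\beta,p}(w)|^p=|K_{\alpha,\beta}(w,z)|^p/\|K_{\alpha,\beta}(\cdot,z)\|_{\alpha,\beta,p}^p$ and using the elementary identity $p/q=p-1$ gives
$$|k^z_{\alpha,\beta,p}(w)|^p\ \approx\ \frac{1}{(1-|z|^2)^{2+\alpha}}\qquad\text{on }\mathbf{D}_{\alpha,\beta}(z,r).$$
A parallel computation, refining Lemma~\ref{measure}(1) by carrying the weight $|w|^{2\beta}(1-|w|^2)^\alpha$ through the integration, yields $\mu_{\alpha,\beta}(\mathbf{D}_{\alpha,\beta}(z,r))\approx (1-|z|^2)^{\alpha+2}$ as $|z|\to 1^-$, and the two estimates match to deliver the required pointwise lower bound.

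Integrating against $\nu$ then gives
$$\frac{\nu(\mathbf{D}_{\alpha,\beta}(z,r))}{\mu_{\alpha,\beta}(\mathbf{D}_{\alpha,\beta}(z,r))}\ \lesssim\ \int_{\mathbf{D}_{\alpha,\beta}(z,r)}|k^z_{\alpha,\beta,p}(w)|^p\,d\nu(w)\ \le\ C$$
uniformly as $|z|\to 1^-$; for $z$ in any compact subset of $\mathbb{D}$ the ratio is trivially bounded, since $\nu(\mathbb{D})<+\infty$ and $\mu_{\alpha,\beta}(\mathbf{D}_{\alpha,\beta}(z,r))$ remains bounded below. The main obstacle is the careful matching of the $(1-|z|^2)$-powers between the reproducing-kernel lower bound and the volume of the modified Bergman disc—essentially the arithmetic already at the heart of Lemma~\ref{measure}—together with the clean use of $\beta\in\mathcal{J}_\alpha$ to eliminate the $Q_{\alpha,\beta}$ factor with uniform constants independent of $z$.
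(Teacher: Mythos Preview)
Your proposal is correct and follows essentially the same strategy as the paper: test the Carleson inequality against a normalized reproducing kernel, restrict the resulting integral to $\mathbf{D}_{\alpha,\beta}(z,r)$, and invoke a pointwise lower bound of the type in Lemma~\ref{measure}(2). The one difference is in the choice of test function: you take $f=k^z_{\alpha,\beta,p}$ and therefore need a $p$-analogue of Lemma~\ref{measure}(2) (which you then sketch), whereas the paper takes $f=(k^z_{\alpha,\beta,2})^{2/p}$, a holomorphic branch being available since $\beta\in\mathcal J_\alpha$ forces $\mathbb K_{\alpha,\beta}(\cdot,z)$ to be zero-free on $\mathbb D$. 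That choice gives $|f|^p=|k^z_{\alpha,\beta,2}|^2$ with unit $\mathcal A_{\alpha,\beta}^p$-norm, so Lemma~\ref{measure}(2) applies verbatim and no separate boundary/compact split is needed. Your route is perfectly sound, but the paper's test function is slightly more economical because it reduces everything to the already-proved $p=2$ case of the kernel lower bound.
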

\begin{proof}
For the proof, we proceed as in \cite{r9}. To this purpose, we assume that $\nu$ is a Carleson measure on $\mathcal{A}_{\alpha,\beta}^p$. Then there exists a constant $\eta>0$ such that, for all $f\in \mathcal{A}_{\alpha,\beta}^p$, we have
$$\int_{\mathbb{D}} |f(w)|^p d\nu(w)\leq \eta \int_{\mathbb{D}} |f(w)|^p d\mu_{\alpha,\beta}(w).$$
Let $r>0$ and $z\in \mathbb{D}^*$. If we take $f(w)=(k_{\alpha,\beta,2}^z(w))^{2/p}$ then we obtain
\begin{align*}
\int_{\mathbf{D}_{\alpha,\beta}(z,r)} |k_{\alpha,\beta,2}^z(w)|^2\  d\nu(w)&\leq \int_{\mathbb{D}} |k_{\alpha,\beta,2}^z(w)|^2\  d\nu(w)\\
&\leq \eta \int_{\mathbb{D}} |k_{\alpha,\beta,2}^z(w)|^2\ d\mu_{\alpha,\beta}(w)\\
&\leq \eta,
\end{align*}
In virtue of Lemma \ref{measure}, we have
\begin{align*}
    \frac{\nu(\mathbf{D}_{\alpha,\beta}(z,r))}{\mu_{\alpha,\beta}\big(\mathbf{D}_{\alpha,\beta}(z,r)\big)}&\lesssim \nu(\mathbf{D}_{\alpha,\beta}(z,r)) \inf_{w\in \mathbf{D}_{\alpha,\beta}(z,r)}|k_{\alpha,\beta,2}^z(w)|^2\\
    &\lesssim \int_{\mathbf{D}_{\alpha,\beta}(z,r)} |k_{\alpha,\beta,2}^z(w)|^2\  d\nu(w)\leq \eta.
\end{align*}
Hence
$$\sup_{z\in \mathbb{D}} \frac{\nu(\mathbf{D}_{\alpha,\beta}(z,r))}{\mu_{\alpha,\beta}\big(\mathbf{D}_{\alpha,\beta}(z,r)\big)}<+\infty.$$
\end{proof}

The next theorem analyze the relation between bounded Toeplitz operator $T_\nu$ on $\mathcal{A}_{\alpha,\beta}^p$ and its Berezin symbol which defined by
$$\tilde{\nu}(z)=\left\langle T_\nu k_{\alpha,\beta,p}^z,k_{\alpha,\beta,p}^z\right\rangle_{\alpha,\beta}=\int_{\mathbb{D}} |k_{\alpha,\beta,p}^z(w)|^2\ d\nu(w).$$

\begin{thm}\label{Carleson0}
Let $\nu$ be a finite positive Borel measure on $\mathbb{D}$. If $\nu$ is a Carleson measure on $\mathcal{A}_{\alpha,\beta}^2$ then $T_\nu$ is bounded on $\mathcal{A}_{\alpha,\beta}^2$ which in turn implies that $\tilde{\nu}$ is a bounded function on $\mathbb{D}$.
\end{thm}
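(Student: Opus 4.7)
The plan is to prove the two implications separately, both using short reproducing-kernel Hilbert-space arguments and no use of the geometric lemmas from the preceding subsection.

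For the first implication ($\nu$ Carleson implies $T_\nu$ bounded), I would work through the sesquilinear pairing. Given $f,g\in\mathcal{A}_{\alpha,\beta}^2$, I would write $\langle T_\nu f,g\rangle_{\alpha,\beta}$ as an iterated integral using the definition of $T_\nu$, then apply Fubini's theorem to exchange the order of integration. The inner integral against $d\mu_{\alpha,\beta}$ collapses via the reproducing identity $g(w)=\int g(z)\,\overline{\mathbb{K}_{\alpha,\beta}(z,w)}\,d\mu_{\alpha,\beta}(z)$ together with the Hermitian symmetry $\mathbb{K}_{\alpha,\beta}(w,z)=\overline{\mathbb{K}_{\alpha,\beta}(z,w)}$, producing the clean formula
$$\langle T_\nu f,g\rangle_{\alpha,\beta}=\int_{\mathbb{D}} f(w)\,\overline{g(w)}\,d\nu(w).$$
A Cauchy--Schwarz inequality on the measure $\nu$ followed by the Carleson estimate applied once to $f$ and once to $g$ then yields
$$\bigl|\langle T_\nu f,g\rangle_{\alpha,\beta}\bigr|\leq C\,\|f\|_{\alpha,\beta,2}\,\|g\|_{\alpha,\beta,2},$$
and taking the supremum over $g$ in the unit ball of $\mathcal{A}_{\alpha,\beta}^2$ (Hilbert-space duality) delivers $\|T_\nu f\|_{\alpha,\beta,2}\lesssim \|f\|_{\alpha,\beta,2}$, i.e.\ the boundedness of $T_\nu$.

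For the second implication ($T_\nu$ bounded implies $\tilde\nu$ bounded), the argument is essentially immediate from the definition of the Berezin symbol. Since $k_{\alpha,\beta,2}^z$ is the \emph{normalized} reproducing kernel at $z$, one has $\|k_{\alpha,\beta,2}^z\|_{\alpha,\beta,2}=1$ by construction; Cauchy--Schwarz in the Hilbert space $\mathcal{A}_{\alpha,\beta}^2$ therefore gives
$$|\tilde\nu(z)|=\bigl|\langle T_\nu k_{\alpha,\beta,2}^z,\,k_{\alpha,\beta,2}^z\rangle_{\alpha,\beta}\bigr|\leq \|T_\nu\|\,\|k_{\alpha,\beta,2}^z\|_{\alpha,\beta,2}^{2}=\|T_\nu\|$$
uniformly in $z\in\mathbb{D}^*$, so $\tilde\nu\in L^\infty(\mathbb{D})$.

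The only delicate point is the justification of Fubini in the first step. I would handle it by observing that for each fixed $z$ the slice $\mathbb{K}_{\alpha,\beta}(\cdot,z)$ belongs to $\mathcal{A}_{\alpha,\beta}^2$, so the Carleson hypothesis yields $\int|\mathbb{K}_{\alpha,\beta}(w,z)|^2\,d\nu(w)<\infty$; combined with $f\in L^2(\nu)$ (again by Carleson) and a Cauchy--Schwarz estimate, this provides the absolute integrability of the double integral needed to legitimize the exchange. Everything else in the proof is a routine chain of standard Hilbert-space manipulations, and no finer information about $Q_{\alpha,\beta}$ is needed beyond what the reproducing property already encodes.
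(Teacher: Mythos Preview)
Your proposal is correct and follows essentially the same route as the paper's proof: pass from $\langle T_\nu f,g\rangle_{\alpha,\beta}$ to $\int f\overline{g}\,d\nu$ via the reproducing identity, bound this by Cauchy--Schwarz and the Carleson estimate, and then for the second implication apply Cauchy--Schwarz in $\mathcal{A}_{\alpha,\beta}^2$ with the unit vector $k_{\alpha,\beta,2}^z$. If anything, your write-up is more explicit about the Fubini justification and cleaner in applying the $p=2$ Carleson hypothesis directly (the paper phrases the bound via an $\mathcal{A}_{\alpha,\beta}^1$ estimate on $|fg|$, which is informal since $f\overline{g}$ is not holomorphic, but the intended inequality is exactly the Cauchy--Schwarz-then-Carleson step you spell out).
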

\begin{proof}
By definition of Carleson measure, there exists a constant $C>0$ such that in particular for every $f\in \mathcal{A}_{\alpha,\beta}^1$ we have
$$\int_{\mathbb{D}} |f(z)| d\nu(z)\leq C \int_{\mathbb{D}} |f(z)| d\mu_{\alpha,\beta}(z).$$
If we take $f$ and $g$ two functions in $\mathcal{A}_{\alpha,\beta}^2$, then
\begin{align*}
|\left\langle T_\nu f, g \right\rangle_{\alpha,\beta}|&\leq \int_{\mathbb{D}} |f(z) g(z)|\ d\nu(z)\\
&\leq C \int_{\mathbb{D}} |f(z) g(z)| d\mu_{\alpha,\beta}(z)\\
&\leq C \left\| f\right\|_{\alpha,\beta,2} \left\| g\right\|_{\alpha,\beta,2}.
\end{align*}
Therefore, $T_\nu$ is bounded on $\mathcal{A}_{\alpha,\beta}^2$. Now, suppose that the later statement holds. Since $k_{\alpha,\beta,2}^z$ is a unit vector in $\mathcal{A}_{\alpha,\beta}^2$, a direct application of Cauchy Schwartz inequality gives the boundedness of  $\tilde{\nu}$ on $\mathbb{D}$.
\end{proof}

For a bounded operator $S$ on the Bergman space $\mathcal{A}_{\alpha,\beta}^p$, we can define its Berezin transform as
$$\widetilde{S}(z)=\left\langle Sk_{\alpha,\beta,p}^z,k_{\alpha,\beta,q}^z\right\rangle_{\alpha,\beta}.$$
Note that the Berezin transform $\widetilde{T}_\varphi$ of Toeplitz operator $T_\varphi$, we will write $\widetilde{T}_\varphi=\widetilde{\varphi}$, is defined to be the Berezin transform of $\varphi$.
Fix $\varphi\in \textbf{L}^1(\mathbb{D}, d\mu_{\alpha,\beta})$ and $r>0$, we define the averaging function $\hat{\varphi}_r$ of $\varphi$ in the Bergman metric on $\mathbb{D}$ by
$$\hat{\varphi}_r(z)=\frac{1}{\mu_{\alpha,\beta}\big(\mathbf{D}_{\alpha,\beta}(z,r)\big)}\int_{\mathbf{D}_{\alpha,\beta}(z,r)} \varphi(w) d\mu_{\alpha,\beta}(w).$$

\begin{cor}
Let $\varphi$ be a positive function in $\textbf{L}^1(\mathbb{D}, d\mu_{\alpha,\beta})$. If $\hat{\varphi}_r$ is a bounded function on $\mathbb{D}$ then $T_\varphi$ is bounded on $\mathcal{A}_{\alpha,\beta}^2$ which in turn implies that $\tilde{\varphi}$ is a bounded function on $\mathbb{D}$.
\end{cor}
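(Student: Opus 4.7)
The natural plan is to reduce the corollary to Theorem \ref{Carleson0} by viewing the hypothesis as a Carleson condition on the weighted measure $d\nu:=\varphi\,d\mu_{\alpha,\beta}$. Since $\varphi\in\textbf{L}^1(\mathbb{D},d\mu_{\alpha,\beta})$ and $\varphi\geq 0$, $\nu$ is a finite positive Borel measure on $\mathbb{D}$. Moreover, a direct computation using the definition of $T_\nu$ and of the Berezin symbol gives $T_\nu=T_\varphi$ and $\widetilde{\nu}=\widetilde{\varphi}$. Hence, by Theorem \ref{Carleson0}, once I know $\nu$ is a Carleson measure on $\mathcal{A}_{\alpha,\beta}^2$, the two conclusions of the corollary follow at once.

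So the real task is to show: \emph{if $\hat{\varphi}_r$ is bounded on $\mathbb{D}$, then $\nu$ is a Carleson measure on $\mathcal{A}_{\alpha,\beta}^2$.} The hypothesis $\sup_z\hat{\varphi}_r(z)=:M<+\infty$ rewrites exactly as the geometric condition
$$\nu\bigl(\mathbf{D}_{\alpha,\beta}(z,r)\bigr)\leq M\,\mu_{\alpha,\beta}\bigl(\mathbf{D}_{\alpha,\beta}(z,r)\bigr),\qquad z\in\mathbb{D},$$
which is the converse of the conclusion of Theorem \ref{Carleson}. To upgrade this pointwise control to the integral inequality that defines a Carleson measure, I would run the standard lattice argument adapted to the modified Bergman metric $\mathbf{d}_{\alpha,\beta}$. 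Namely, I pick a sequence $(z_k)$ which is an $r$-lattice in $(\mathbb{D},\mathbf{d}_{\alpha,\beta})$, so that the discs $\mathbf{D}_{\alpha,\beta}(z_k,r)$ cover $\mathbb{D}$ while the inflated discs $\mathbf{D}_{\alpha,\beta}(z_k,2r)$ have uniformly bounded overlap. By the subharmonicity of $|f|^2$ and by Lemma \ref{measure} (which gives $\mu_{\alpha,\beta}(\mathbf{D}_{\alpha,\beta}(z_k,r))\asymp\mu_{\alpha,\beta}(\mathbf{D}_{\alpha,\beta}(z_k,2r))$ uniformly in $k$), one obtains the sub-mean value type estimate
$$\sup_{w\in\mathbf{D}_{\alpha,\beta}(z_k,r)}|f(w)|^2\;\lesssim\;\frac{1}{\mu_{\alpha,\beta}(\mathbf{D}_{\alpha,\beta}(z_k,2r))}\int_{\mathbf{D}_{\alpha,\beta}(z_k,2r)}|f|^2\,d\mu_{\alpha,\beta}$$
for every $f\in\mathcal{A}_{\alpha,\beta}^2$. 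Combining with the geometric bound on $\nu$ and summing over $k$ gives
$$\int_{\mathbb{D}}|f|^2\,d\nu\leq\sum_k\nu\bigl(\mathbf{D}_{\alpha,\beta}(z_k,r)\bigr)\sup_{\mathbf{D}_{\alpha,\beta}(z_k,r)}|f|^2\;\lesssim\;M\sum_k\int_{\mathbf{D}_{\alpha,\beta}(z_k,2r)}|f|^2\,d\mu_{\alpha,\beta}\;\lesssim\;M\,\|f\|_{\alpha,\beta,2}^2,$$
the last inequality using the bounded overlap of $\{\mathbf{D}_{\alpha,\beta}(z_k,2r)\}$. This shows $\nu$ is Carleson, and invoking Theorem \ref{Carleson0} closes the argument.

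The main obstacle I expect is the lattice/covering step: one must justify the existence of an $r$-lattice together with bounded-overlap of the doubled discs in the metric $\mathbf{d}_{\alpha,\beta}$, and must combine this with a subharmonicity estimate phrased in terms of $\mu_{\alpha,\beta}$ rather than Lebesgue measure (note that $d\mu_{\alpha,\beta}$ carries the weight $|z|^{2\beta}(1-|z|^2)^\alpha$, which is unbounded near $0$ when $\beta<0$). This is where the equivalence with the Poincaré metric (inequality \eqref{Pe}) together with the comparison of $\mu_{\alpha,\beta}$-masses of Bergman discs supplied by Lemma \ref{measure} does the essential work, reducing the construction to the classical one on $(\mathbb{D},\mathbf{d}_{0,0})$ that can be imported from \cite{r9}.
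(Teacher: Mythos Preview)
Your reduction to Theorem~\ref{Carleson0} by setting $d\nu=\varphi\,d\mu_{\alpha,\beta}$ is exactly the paper's strategy; its entire proof is the sentence ``immediate consequence of Theorem~\ref{Carleson} and Theorem~\ref{Carleson0}.'' But observe that Theorem~\ref{Carleson} only gives the implication \emph{Carleson $\Rightarrow$ $\hat\varphi_r$ bounded}, whereas the corollary requires the \emph{converse}. You correctly diagnose this and try to supply that converse via a lattice argument --- which is more than the paper does.

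Here is the genuine gap. Your covering argument rests on (i) the sub-mean-value estimate $\sup_{\mathbf{D}_{\alpha,\beta}(z_k,r)}|f|^2\lesssim\mu_{\alpha,\beta}\bigl(\mathbf{D}_{\alpha,\beta}(z_k,2r)\bigr)^{-1}\int_{\mathbf{D}_{\alpha,\beta}(z_k,2r)}|f|^2\,d\mu_{\alpha,\beta}$ and (ii) the doubling $\mu_{\alpha,\beta}\bigl(\mathbf{D}_{\alpha,\beta}(z,r)\bigr)\asymp\mu_{\alpha,\beta}\bigl(\mathbf{D}_{\alpha,\beta}(z,2r)\bigr)$. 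You attribute (ii) to Lemma~\ref{measure}, but that lemma yields only a \emph{lower} bound on $\mu_{\alpha,\beta}$ of a modified Bergman disc (buried in the proof of part~(2)); no upper bound is provided. The matching upper bound, and with it the analogue of \cite[Proposition~4.3.8]{r9} that drives (i), is precisely what the paper's closing paragraph says it could not establish (``we managed to show just one inequality \dots\ the other will then be considered as an open question''), and on that basis it declares the converse of Theorem~\ref{Carleson} open. So the obstacle you flag at the end is not a technicality to be imported from \cite{r9}; it is the whole missing step. Your write-up is more candid than the paper's one-line citation, but both share the same unproved implication.
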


\begin{proof}
Is is an immediate consequence of Theorem \ref{Carleson} and Theorem \ref{Carleson0} by considering that $d\nu(z)=\varphi(z) d\mu_{\alpha,\beta}(z)$.
\end{proof}

\begin{thm}\label{Carleson1}
Let $\nu$ be a positive Borel measure on $\mathbb{D}$, $r>0$ and $p\geq 1$. If $\nu$ is a vanishing Carleson measure on $\mathcal{A}_{\alpha,\beta}^p$ then
$$\lim_{|z|\rightarrow 1^-} \frac{\nu(\mathbf{D}_{\alpha,\beta}(z,r))}{\mu_{\alpha,\beta}\big(\mathbf{D}_{\alpha,\beta}(z,r)\big)}=0.$$
\end{thm}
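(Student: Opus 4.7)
The plan is to imitate the proof of Theorem \ref{Carleson}, upgrading the uniform Carleson bound to a vanishing one by invoking compactness of the inclusion $i_p:\mathcal A^{p}_{\alpha,\beta}\hookrightarrow L^p(\mathbb D,d\nu)$. Fix $r>0$ and, exactly as in Theorem \ref{Carleson}, introduce the test family
$$f_z(w):=\bigl(k_{\alpha,\beta,2}^{z}(w)\bigr)^{2/p},\qquad z\in\mathbb D^*,$$
so that $\|f_z\|_{\alpha,\beta,p}=1$ for every $z$ and $|f_z|^p=|k_{\alpha,\beta,2}^{z}|^2$.

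The first step is to prove that $\{f_z\}$ converges to $0$ weakly in $\mathcal A^{p}_{\alpha,\beta}$ (weak-$*$ in $\mathcal A^{1}_{\alpha,\beta}$ when $p=1$) as $|z|\to 1^-$. Following the blueprint of Proposition \ref{prop4.2}, I would pair $f_z$ with the dense subset of $\mathcal A^{q}_{\alpha,\beta}$ consisting of functions $g_h(w)=h(w)/w^{m_{q,\beta}}$ with $h$ bounded and holomorphic on $\mathbb{D}$ (here $q$ is the conjugate exponent of $p$). The reproducing property of $\mathbb K_{\alpha,\beta}$ and the asymptotic $\|K_{\alpha,\beta}(\cdot,z)\|_{\alpha,\beta,2}\approx\bigl(|z|^m(1-|z|^2)^{(\alpha+2)/2}\bigr)^{-1}$ force $\langle g_h,f_z\rangle_{\alpha,\beta}\to 0$ as $|z|\to 1^-$; combined with the uniform norm bound $\|f_z\|_{\alpha,\beta,p}=1$ and density, this yields the desired weak (resp. weak-$*$) null convergence.

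Once weak null convergence is established, the vanishing Carleson hypothesis gives that $i_p$ is compact (resp. $*$-compact when $p=1$), so it sends $(f_z)$ to a norm-null family in $L^p(\mathbb D,d\nu)$. Using the identity $|f_z|^p=|k_{\alpha,\beta,2}^{z}|^2$, this translates into
$$\int_{\mathbb D}|k_{\alpha,\beta,2}^{z}(w)|^2\,d\nu(w)=\|i_p(f_z)\|^{p}_{L^p(d\nu)}\longrightarrow 0\quad\text{as }|z|\to 1^-.$$
Combining this with the pointwise lower bound furnished by Lemma \ref{measure}(2) yields
$$\frac{\nu(\mathbf D_{\alpha,\beta}(z,r))}{\mu_{\alpha,\beta}(\mathbf D_{\alpha,\beta}(z,r))}\lesssim\nu(\mathbf D_{\alpha,\beta}(z,r))\inf_{w\in \mathbf D_{\alpha,\beta}(z,r)}|k_{\alpha,\beta,2}^{z}(w)|^2\leq\int_{\mathbf D_{\alpha,\beta}(z,r)}|k_{\alpha,\beta,2}^{z}(w)|^2\,d\nu(w)\longrightarrow 0,$$
which is the claim.

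The main obstacle is the rigorous justification of the weak convergence in the second step when $2/p\not\in\mathbb N$: the power $(k_{\alpha,\beta,2}^z)^{2/p}$ has to be defined through the principal branch of $(1-w\overline z)^{-(\alpha+2)\cdot 2/p}$ in order to produce a genuine element of $\mathcal A^{p}_{\alpha,\beta}$, and in the boundary case $p=1$ one additionally needs to interpret the density/dual argument in the weak-$*$ topology used in the definition of $*$-compactness of $i_1$. Once these technicalities are handled exactly as in the proofs of Theorem \ref{Carleson} and Proposition \ref{prop4.2}, the remaining integration argument is straightforward.
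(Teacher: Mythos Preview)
Your proposal is correct and follows essentially the same route as the paper: test with $f_z=(k_{\alpha,\beta,2}^{z})^{2/p}$, use that these unit vectors tend to $0$ weakly (weak-$*$ when $p=1$), apply compactness of $i_p$ to get $\int_{\mathbb D}|k_{\alpha,\beta,2}^{z}|^2\,d\nu\to 0$, and finish with Lemma~\ref{measure}(2). You are in fact more careful than the paper, which simply cites Proposition~\ref{prop4.2} (stated for $k_{\alpha,\beta,p}^z$, not for $(k_{\alpha,\beta,2}^{z})^{2/p}$) at the weak-convergence step; note also that the standing hypothesis $\beta\in\mathcal J_\alpha$ in this section (so $m=0$ and $Q_{\alpha,\beta}$ is zero-free on $\mathbb D$) is exactly what makes the fractional power $f_z$ a bona fide element of $\mathcal A^p_{\alpha,\beta}$, so your flagged ``branch'' obstacle does not actually arise here.
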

\begin{proof}
The proof is obvious. Indeed, we suppose that $\nu$ is a vanishing Carleson measure on $\mathcal{A}_{\alpha,\beta}^p$, then the inclusion mapping $i_p$ defined above is compact. On account of Proposition \ref{prop4.2}, one has
$$ \int_{\mathbf{D}_{\alpha,\beta}(z,r)} |k_{\alpha,\beta,2}^z(w)|^2\  d\nu(w)\leq \int_{\mathbb{D}} |k_{\alpha,\beta,2}^z(w)|^2\  d\nu(w)\rightarrow 0,$$
as $|z|$ tends to $1^-$. By Lemma \ref{measure}, we have
$$\lim_{|z|\rightarrow 1^-} \frac{\nu(\mathbf{D}_{\alpha,\beta}(z,r))}{\mu_{\alpha,\beta}\big(\mathbf{D}_{\alpha,\beta}(z,r)\big)}=0.$$

\end{proof}

\begin{thm}\label{Carleson2}
Let $\nu$ be a positive Borel measure on $\mathbb{D}$ and $p\geq 1$. If $\nu$ is a vanishing Carleson measure on $\mathcal{A}_{\alpha,\beta}^p$ then $T_\nu$ is compact on $\mathcal{A}_{\alpha,\beta}^2$ which implies that $\tilde{\nu} (z)$ tends to $0$ as $|z|$ tends to $1^-$.
	
\end{thm}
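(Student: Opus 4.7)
\emph{Proof strategy.} The statement splits into two implications, which I would address in turn. The plan for the compactness part is to factor $T_\nu$ through $L^2(\mathbb D, d\nu)$; for the decay of $\tilde\nu$, I would rely on the weak-null behaviour of the normalised reproducing kernels established in Proposition \ref{prop4.2}.

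For the first implication, let $J : \mathcal{A}_{\alpha,\beta}^2 \to L^2(\mathbb D, d\nu)$ denote the canonical inclusion, which is bounded because $\nu$ is in particular a Carleson measure. A direct computation using the reproducing property of $\mathbb K_{\alpha,\beta}$ shows that the adjoint acts as
$$(J^\ast g)(z) = \int_{\mathbb D} g(w)\, \overline{\mathbb K_{\alpha,\beta}(w,z)}\, d\nu(w),$$
so that $T_\nu = J^\ast J$. It therefore suffices to prove that $J$ is compact. I would invoke Theorem \ref{Carleson1} to convert the vanishing Carleson hypothesis into the ratio condition $\nu(\mathbf{D}_{\alpha,\beta}(z,r))/\mu_{\alpha,\beta}(\mathbf{D}_{\alpha,\beta}(z,r)) \to 0$ as $|z|\to 1^-$, and then apply this to a weakly-null sequence $(f_n)$ in $\mathcal{A}_{\alpha,\beta}^2$. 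On a large compact set $K \subset \mathbb D^\ast$, the argument used in Lemma \ref{lem4.2} (passing to $\widetilde f_n = z^{\nu_f} f_n$ and applying Montel's theorem via Proposition 1 of \cite{Gh-Za}) gives uniform convergence $f_n \to 0$ on $K$, handling the inner part. On the complementary annulus $\{|z| > 1-\varepsilon\}$, a covering by modified Bergman discs $\mathbf{D}_{\alpha,\beta}(z_j, r)$ together with a sub-mean-value inequality
$$|f_n(w)|^2 \lesssim \frac{1}{\mu_{\alpha,\beta}(\mathbf{D}_{\alpha,\beta}(w, r))} \int_{\mathbf{D}_{\alpha,\beta}(w, r)} |f_n|^2\, d\mu_{\alpha,\beta}$$
would bound $\int |f_n|^2\, d\nu$ by the supremum of the vanishing ratio times $\|f_n\|_{\alpha,\beta,2}^2$. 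Combining both parts yields $\|J f_n\|_{L^2(d\nu)} \to 0$, so $J$ is compact and $T_\nu = J^\ast J$ is compact on $\mathcal{A}_{\alpha,\beta}^2$.

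For the second implication, once $T_\nu$ is known to be compact, Proposition \ref{prop4.2} guarantees that $k_{\alpha,\beta,2}^z \rightharpoonup 0$ weakly as $|z|\to 1^-$, so compactness of $T_\nu$ forces $\|T_\nu k_{\alpha,\beta,2}^z\|_{\alpha,\beta,2} \to 0$. Since $\|k_{\alpha,\beta,2}^z\|_{\alpha,\beta,2} = 1$, Cauchy-Schwarz immediately gives
$$|\tilde\nu(z)| = \bigl|\langle T_\nu k_{\alpha,\beta,2}^z, k_{\alpha,\beta,2}^z\rangle_{\alpha,\beta}\bigr| \leq \|T_\nu k_{\alpha,\beta,2}^z\|_{\alpha,\beta,2} \longrightarrow 0.$$

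The step I expect to be hardest is the sub-mean-value inequality on modified Bergman discs: in the classical Bergman theory one leverages M\"obius invariance of the Poincar\'e measure, a tool unavailable here because the weight $|w|^{2\beta}(1-|w|^2)^\alpha$ breaks that invariance (as the introduction emphasises). The equivalence established in the previous subsection between $\mathbf{d}_{\alpha,\beta}$ and the classical Poincar\'e metric should however allow me to sandwich $\mathbf{D}_{\alpha,\beta}(z,r)$ between two standard Poincar\'e discs and transfer the classical sub-mean-value estimate at the price of absolute constants depending only on $\alpha, \beta, r$.
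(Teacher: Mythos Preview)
Your factorisation $T_\nu = J^\ast J$ and your treatment of the second implication via Proposition~\ref{prop4.2} are exactly what the paper does (the paper leaves the second implication as ``straightforward'' but your argument is the intended one).

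The first implication, however, you make far harder than necessary, and in doing so you walk into a trap. In this paper, ``$\nu$ is a vanishing Carleson measure on $\mathcal{A}_{\alpha,\beta}^p$'' is \emph{defined} to mean that the inclusion $i_p:\mathcal{A}_{\alpha,\beta}^p\to L^p(\mathbb D,d\nu)$ is compact. So compactness of the inclusion is the hypothesis, not something to be proved. The paper's argument is therefore immediate: it shows the elementary bound $\|T_\nu f\|_{\alpha,\beta,2}\le C\|f\|_{L^2(d\nu)}$ (your $J^\ast$ is bounded because $\nu$ is Carleson), takes a weakly null sequence $(f_n)$ in $\mathcal{A}_{\alpha,\beta}^2$, and uses compactness of $i_2$ to conclude $\|f_n\|_{L^2(d\nu)}\to 0$, hence $\|T_\nu f_n\|_{\alpha,\beta,2}\to 0$. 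No geometry, no covering, no sub-mean-value estimate.

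Your route---passing through Theorem~\ref{Carleson1} to the ratio condition and then trying to recover compactness of $J$ from that ratio via a sub-mean-value inequality and a covering argument---is precisely the converse direction that the paper's concluding paragraph flags as \emph{open} in this setting. The obstruction is the missing second inequality in Lemma~\ref{measure} (the analogue of \cite[Lemma~4.3.3]{r9} and Proposition~4.3.8 there), which the weight $|w|^{2\beta}$ prevents one from obtaining by the usual M\"obius-invariance trick. So the step you yourself identify as hardest is not merely hard: within the tools developed here it is unresolved, and in any case it is unnecessary given the definition in force.
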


\begin{proof}
It's straightforward to see the second implication. As for the first one, we assume that
 $\nu$ is a vanishing Carleson measure on $\mathcal{A}_{\alpha,\beta}^p$ and we prove that $T_\nu$ is compact.
For every $f\in \mathcal{A}_{\alpha,\beta}^2$, we have
\begin{align*}
\left\|T_\nu f\right\|_{\alpha,\beta,2}&=\sup_{\substack{\left\|g\right\|_{\alpha,\beta,2}=1 \\ g\in \mathcal{A}_{\alpha,\beta}^2} } |\left\langle T_\nu f,g\right\rangle_{\alpha,\beta}|\\
&=\sup_{\substack{\left\|g\right\|_{\alpha,\beta,2}=1 \\ g\in \mathcal{A}_{\alpha,\beta}^2} } \left|\int_{\mathbb{D}} f(z)\overline{g(z)} d\nu(z)  \right|\\
&\leq \left\|f\right\|_{\textbf{L}^2(d\nu)} \sup_{\substack{\left\|g\right\|_{\alpha,\beta,2}=1 \\ g\in \mathcal{A}_{\alpha,\beta}^2} } \left\|g\right\|_{\textbf{L}^2(d\nu)}\\
&\leq C \left\|f\right\|_{\textbf{L}^2(d\nu)},
\end{align*}
where $C>0$. The last inequality is due to the fact that $\nu$ is a Carleson measure. Now, we take a sequence $(f_n)_{n\geq 0}$ such that it convergence weakly to $0$. Since $\nu$ is vanishing Carleson measure, the mapping implication $i_2: \mathcal{A}_{\alpha,\beta}^2\rightarrow\textbf{L}^2(d\nu)$ is compact which implies that $\left\|f\right\|_{\textbf{L}^2(d\nu)}\rightarrow 0$. Thereby, we obtain that $T_\nu$ is compact.

\end{proof}

\begin{cor}
Let $\varphi$ be a positive function in $\textbf{L}^1(\mathbb{D}, d\mu_{\alpha,\beta})$. If $\hat{\varphi}_r(z)$ tends to $0$ as $|z|$ tends to $1^-$ then $T_\varphi$ is compact on $\mathcal{A}_{\alpha,\beta}^2$ which in turn implies that $\tilde{\varphi}(z)$ tends to $0$ as $|z|$ tends to $1^-$.
\end{cor}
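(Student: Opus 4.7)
My plan is to reduce the statement to the measure-theoretic framework already developed. Setting $d\nu(z)=\varphi(z)\,d\mu_{\alpha,\beta}(z)$, the hypothesis simply says
$$\hat{\varphi}_r(z)=\frac{\nu\bigl(\mathbf{D}_{\alpha,\beta}(z,r)\bigr)}{\mu_{\alpha,\beta}\bigl(\mathbf{D}_{\alpha,\beta}(z,r)\bigr)}\longrightarrow 0\quad\text{as }|z|\to 1^-,$$
and, because $T_{\nu}=T_\varphi$, the conclusion can be read off from Theorem \ref{Carleson2} once we know that $\nu$ is a vanishing Carleson measure on $\mathcal{A}_{\alpha,\beta}^2$. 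So the entire argument reduces to establishing that vanishing of the normalized averages forces the vanishing Carleson condition. This is exactly the converse direction to Theorem \ref{Carleson1}, and it is the one nontrivial step.

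For that step, I would argue via the sub-mean value property of $|f|^2$: for $f\in\mathcal{A}_{\alpha,\beta}^2$, the subharmonicity of $|f|^2$ combined with Lemma \ref{measure}(1) gives an estimate of the form
$$|f(w)|^2\lesssim \frac{1}{\mu_{\alpha,\beta}\bigl(\mathbf{D}_{\alpha,\beta}(w,r)\bigr)}\int_{\mathbf{D}_{\alpha,\beta}(w,r)}|f(\zeta)|^2\,d\mu_{\alpha,\beta}(\zeta),$$
with a constant independent of $w$, using Inequality \eqref{Pe} to pass between the modified and the Poincar\'e metric. Integrating against $d\nu$, applying Fubini, and dominating the pointwise ratio of $\nu$ to $\mu_{\alpha,\beta}$ on small Bergman disks by $\hat{\varphi}_r$ (together with the finite-overlap/Vitali covering available for the Bergman metric) yields the embedding inequality with a constant proportional to $\sup_{w\in\mathbb{D}}\hat{\varphi}_r(w)$. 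Splitting $\mathbb{D}$ into a large compact piece (where $\nu$ restricts to a trivially vanishing Carleson measure by dominated convergence) and an outer annulus on which $\hat{\varphi}_r$ is uniformly small, one concludes that the inclusion $\mathcal{A}_{\alpha,\beta}^2\hookrightarrow \mathbf{L}^2(\mathbb{D},d\nu)$ is compact, i.e.\ $\nu$ is a vanishing Carleson measure.

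Given this, Theorem \ref{Carleson2} applied to $\nu$ delivers the compactness of $T_\varphi$ on $\mathcal{A}_{\alpha,\beta}^2$. For the final implication, $\tilde\varphi(z)=\langle T_\varphi k_{\alpha,\beta,2}^z,k_{\alpha,\beta,2}^z\rangle_{\alpha,\beta}$; since $k_{\alpha,\beta,2}^z\to 0$ weakly in $\mathcal{A}_{\alpha,\beta}^2$ as $|z|\to 1^-$ by Proposition \ref{prop4.2} and $T_\varphi$ is compact, $T_\varphi k_{\alpha,\beta,2}^z\to 0$ in norm, and Cauchy--Schwarz together with $\|k_{\alpha,\beta,2}^z\|_{\alpha,\beta,2}=1$ yields $\tilde\varphi(z)\to 0$. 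The main obstacle in this whole scheme is the converse Carleson characterization in step two; the subharmonic covering argument is standard in the classical Bergman setting, but here one must check that Lemma \ref{measure} and the equivalence of $\mathbf{d}_{\alpha,\beta}$ with the Poincar\'e metric in \eqref{Pe} suffice to transplant that argument to the $\beta$-modified setting without losing the comparability of $\mu_{\alpha,\beta}\bigl(\mathbf{D}_{\alpha,\beta}(z,r)\bigr)$ to a power of $(1-|z|^2)$.
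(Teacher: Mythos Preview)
Your analysis is sharper than the paper's own proof, which is literally the single sentence ``It is an immediate consequence of Theorem \ref{Carleson1} and \ref{Carleson2}.'' You have correctly spotted that this one-liner is not self-contained: Theorem \ref{Carleson1} goes from \emph{vanishing Carleson} to $\hat\varphi_r(z)\to 0$, while the corollary needs the reverse implication before Theorem \ref{Carleson2} can be invoked. The paper simply asserts the corollary by analogy with the classical case and does not supply the missing converse.

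Your plan to fill that gap via the sub-mean-value inequality plus a covering argument is exactly the classical route (Zhu, \cite[Proposition 4.3.8]{r9}). The obstruction you flag at the end is real, and in fact the paper acknowledges it explicitly in its closing paragraph: the authors state that they were only able to prove one of the two inequalities in the analogue of \cite[Lemma 4.3.3]{r9} (namely the lower bound in Lemma \ref{measure}(2)), and that the other direction---precisely what is needed to run the converse Carleson argument---``will then be considered as an open question.'' So the step you call ``the one nontrivial step'' is, by the paper's own admission, not available in the $\beta$-modified setting with the tools developed here.

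In short: your reduction to $d\nu=\varphi\,d\mu_{\alpha,\beta}$ and your derivation of the final implication $\tilde\varphi(z)\to 0$ from compactness and Proposition \ref{prop4.2} are correct and match the paper's intent. But the heart of your argument (and of the paper's claimed corollary) rests on an estimate that neither you nor the paper actually establishes; the paper just cites Theorems \ref{Carleson1} and \ref{Carleson2} without confronting the direction problem you identified.
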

\begin{proof}
It is an immediate consequence of Theorem \ref{Carleson1} and \ref{Carleson2}.
\end{proof}
It should be mentioned at the end of this section that the Carleson measure approach to characterize the boundedness and compactness of a Toeplitz operator on Bergman spaces represent an interesting and special subject in its own right. There is a detailed study on Toeplitz operators and the Carleson measure on classical Bergman spaces, a nice overview of its properties and applications is given in a famous book of K.Zhu \cite{r9}. For this is why we wanted to give in this paper an overview on this theme on our space but in fact it is a brief overview because always of problem at the level of calculation. For example, the converse implication  of what may be called the key theorem to prove the other results, Theorem \ref{Carleson}, does not remain valid on the $\beta$-modified Bergman space since we could not pass the inequality found in Proposition 4.3.8 in \cite{r9} on $\mathcal{A}_{\alpha,\beta}^2$. Most of the proofs \cite{r9} are based on \cite[Lemma 4.3.3]{r9} that we tried to prove by modifying the measure and the Bergman disc but unfortunately we managed to show just one inequality (see Lemma \ref{measure}) the other will then be considered as an open question.

\end{document}